\newtheorem{theorem}{Theorem}[section]
\newtheorem{lemma}[theorem]{Lemma}
\newtheorem{proposition}[theorem]{Proposition}
\newtheorem{corollary}[theorem]{Corollary}
\newtheorem{question}[theorem]{Question}
\newtheorem{definition}[theorem]{Definition}
\theoremstyle{remark}
\newtheorem{remark}{Remark}[section]
\newtheorem{rmks}{Remarks}[section]
\newtheorem*{Examples}{Examples}
\newtheorem{example}{Example}
\newtheorem*{claim}{Claim}
\newcommand{\B}{\mathcal{B}}
\newcommand{\D}{\mathscr{D}}
\newcommand{\M}{\mathscr{M}}
\newcommand{\R}{\mathsf{R}}
\renewcommand{\S}{\mathcal{S}}
\newcommand{\U}{\mathcal{U}}
\newcommand{\X}{\mathcal{X}}
\newcommand{\MLR}{\mathsf{MLR}}
\newcommand{\SR}{\mathsf{SR}}
\newcommand{\WtwoR}{\mathsf{W2R}}
\newcommand{\twoMLR}{\mathsf{2MLR}}
\newcommand{\dom}{\text{dom}}
\newcommand{\llb}{\llbracket}
\newcommand{\rrb}{\rrbracket}
\newcommand{\LRr}{\leq_{LR}}
\newcommand{\LRe}{\equiv_{LR}}
\newcommand{\LRD}{\D_{LR}}
\newcommand{\LRrmu}{\leq_{LR(\mu)}}
\newcommand{\LRDmu}{\D_{LR(\mu)}}
\newcommand{\LRDnu}{\D_{LR(\nu)}}
\newcommand{\aand}{\;\&\;}
\newcommand{\RS}{\mathsf{RScope}}
\newcommand{\Deg}{\mathit{deg}}
\definecolor{purple}{rgb}{.9,0.2,.9}
\newcommand{\cs}{2^\omega}
\newcommand{\uh}{{\upharpoonright}}
\renewcommand{\phi}{\varphi}
\newcommand{\str}{2^{<\omega}}
\newcommand{\binrat}{\mathbb{Q}_2}
\newcommand{\appmu}{\widehat{\mu}}
\newcommand{\halts}{{\downarrow}}
\newcommand{\diverge}{{\uparrow}}
\newcommand{\atom}{\mathsf{Atoms}}
\newcommand{\twopartdefow}[4]
{
	\left\{
		\begin{array}{ll}
			#1 & \mbox{if } #2 \\
			#3 & \mbox{otherwise} #4
		\end{array}
	\right.
}
\title{Trivial measures are not so trivial}
\author{Christopher P. Porter}
\date{} % delete this line to display the current date
\begin{document}

\maketitle

\begin{abstract}
Although algorithmic randomness with respect to various non-uniform computable measures is well-studied, little attention has been paid to algorithmic randomness with respect to computable \emph{trivial} measures, where a measure $\mu$ on $\cs$ is trivial if the support of $\mu$ consists of a countable collection of sequences.  In this article, it is shown that there is much more structure to trivial computable measures than has been previously suspected.  
\end{abstract}

\section{Introduction}\label{intro}

Algorithmic randomness with respect to non-uniform computable measures is a well-studied subject.  Although one can find definitions of biased random sequences is the work of von Mises, Ville, and Church, the first generally accepted definition of biased algorithmic randomness can be found in the final section of Martin-L\"of's groundbreaking 1966 paper, ``The Definition of Random Sequences" \cite{Mar66}, in which Martin-L\"of considers definitions of algorithmic randomness for various Bernoulli measures.  This study of definitions of biased randomness was continued in the 1970s by Levin and Zvonkin \cite{ZvoLev70} and Schnorr \cite{Sch71},\cite{Sch77}.  In particular, Levin, Zvonkin, and Schnorr studied measures that are \emph{atomic}, i.e., measures $\mu$ for which there is some $A\in\cs$ such that $\mu(\{A\})>0$, where $A$ is called an \emph{atom} of $\mu$, denoted $A\in\atom_\mu$.  \\

Schnorr further studied what he called \emph{discrete} measures, where $\mu$ is discrete if~$\mu(\atom_\mu)=1$, or equivalently, if the support of $\mu$ is a countable collection of sequences.  In \cite{Sch77}, Schnorr claimed the following:
\begin{claim}
$\MLR_\mu=\SR_\mu$ if and only if $\mu$ is discrete.
\end{claim}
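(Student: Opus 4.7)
The plan is to prove the equivalence by handling each direction separately. The containment $\MLR_\mu \subseteq \SR_\mu$ holds for every computable measure, since every Schnorr test is a Martin-L\"of test; hence the substantive content is the reverse inclusion when $\mu$ is discrete, and the production of a Schnorr-but-not-ML-random sequence when $\mu$ is not discrete.

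For the direction $(\Leftarrow)$, assume $\mu$ is discrete, i.e., $\mu(\atom_\mu) = 1$. My goal would be to show $\MLR_\mu = \SR_\mu = \atom_\mu$. The inclusion $\atom_\mu \subseteq \MLR_\mu$ is standard: if $\mu(\{A\}) = \alpha > 0$, then for any Martin-L\"of test $\{U_n\}$ and any $n$ with $2^{-n} < \alpha$, one has $A \notin U_n$; combined with $\MLR_\mu \subseteq \SR_\mu$, this yields $\atom_\mu \subseteq \SR_\mu$. For the reverse inclusions it suffices to produce a Schnorr test covering $\cs \setminus \atom_\mu$: given $n$, locate atoms $A_0, \ldots, A_{k_n}$ with $\sum_{i \le k_n}\mu(\{A_i\}) \geq 1 - 2^{-n}$ and set $U_n = \cs \setminus \{A_0, \ldots, A_{k_n}\}$. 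Each such $U_n$ is open, $\mu(U_n) = 1 - \sum_i \mu(\{A_i\}) \leq 2^{-n}$, and every non-atom lies in $\bigcap_n U_n$.

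For the direction $(\Rightarrow)$, I would argue the contrapositive. Suppose $\mu(\atom_\mu) < 1$. Then the continuous part $\mu - \mu|_{\atom_\mu}$ is a nonzero atomless positive measure, so there is a cylinder $[\sigma]$ carrying positive continuous mass under $\mu$. A classical construction (for instance via a slowly growing computable supermartingale) shows that any computable atomless measure admits a Schnorr random that is not Martin-L\"of random. Transferring such an example from the uniform measure to $\mu$ along an effective measure-preserving reduction targeted at the continuous part within $[\sigma]$, and then pulling back, should yield some $X \in \SR_\mu \setminus \MLR_\mu$.

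The hard part, and the place where I expect Schnorr's claim actually to break down, is the $(\Leftarrow)$ argument sketched above: to make $\{U_n\}$ a Schnorr test one needs the atoms $A_0, \ldots, A_{k_n}$ and their masses $\mu(\{A_i\})$ to be produced uniformly effectively in $n$, with $\mu(U_n)$ uniformly computable. While every atom of a computable measure is a computable sequence and each mass $\mu(\{A_i\}) = \inf_\ell \mu([A_i\uh \ell])$ is at least approximable from above, it is not clear that from a code for $\mu$ one can uniformly enumerate enough atoms, quickly enough and with sufficiently precise mass information, to exhaust any prescribed fraction of $\mu$-mass with a computable remainder. In view of the paper's title and the thesis advertised in the abstract, I suspect the correct answer is that Schnorr's claim fails in the $(\Leftarrow)$ direction: there should exist a computable discrete $\mu$ whose atoms cannot be effectively exhausted in this uniform sense, producing $X \in \SR_\mu \setminus \MLR_\mu$ and so refuting the stated equivalence.
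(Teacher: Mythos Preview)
Your final paragraph has it right: the claim is false, and specifically the $(\Leftarrow)$ direction fails. So there is no proof to compare against---the paper \emph{refutes} the claim rather than proving it. What you are missing is the actual counterexample; identifying that the Schnorr-test construction may fail for effectivity reasons is correct diagnosis, but it is not yet a disproof.

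The paper's refutation is constructive and quite different in flavor from your sketch. Rather than analyzing the enumeration of atoms directly, it builds a \emph{tally functional} $\Phi_\Theta$ from a universal Martin-L\"of test $(\U_i)_{i\in\omega}$: on input $X$, the functional outputs $1^{\theta(X,0)}0\,1^{\theta(X,1)}0\cdots$, where $\theta(X,n)$ is the least stage $s$ at which some initial segment of $X$ enters $\U_{n,s}$ (or $+\infty$ if none does). If $X\in\MLR$ then some $\theta(X,n)=+\infty$, so $\Phi_\Theta(X)=\sigma 1^\omega$ is computable; if $X\in\SR\setminus\MLR$ then $\theta(X,\cdot)$ is total and dominates all computable functions (this is exactly the Nies--Stephan--Terwijn argument that such $X$ are high), so $\Phi_\Theta(X)$ is noncomputable. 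Taking $\mu=\lambda_{\Phi_\Theta}$, preservation of Martin-L\"of randomness together with Shen's converse forces $\MLR_\mu=\atom_\mu$, while preservation of Schnorr randomness puts $\Phi_\Theta(X)\in\SR_\mu\setminus\atom_\mu$ for any $X\in\SR\setminus\MLR$. Thus $\mu$ is trivial but $\SR_\mu\supsetneq\MLR_\mu$.

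As for $(\Rightarrow)$: the paper leaves this direction as an open question. Your contrapositive sketch is plausible but not complete---in particular, the continuous part $\mu-\mu|_{\atom_\mu}$ need not be a computable measure when the atoms of $\mu$ cannot be effectively enumerated, so the ``effective measure-preserving reduction'' you invoke is not obviously available.
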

\noindent
Here $\MLR_\mu$ denotes the collection of $\mu$-Martin-L\"of random sequences and $\SR_\mu$ denotes the collection of $\mu$-Schnorr random sequences.

Discrete measures were later referred to in Kautz's dissertation \cite{Kau91} as \emph{trivial}, the implication being that such measures have no interesting structural properties; once we've assigned all measure to some countable collection of points, there appears to be nothing left to say about the resulting measure.

The main goal of this paper is to show that this is \emph{not} the case; there is much more structure to trivial measures than has been previously suspected.  Not only do we show that the above claim of Schnorr's is false, but we also construct a number of trivial measures that allow us to separate various notions of randomness.  That is, we prove several theorems of the following form: given two randomness notions $\R^1$ and $\R^2$ such that $\R^1_\mu\subseteq\R^2_\mu$ for every computable measure $\mu$, there is a trivial computable measure $\mu$ such that (i) $\R^1_\mu=\atom_\mu$ and (ii) $\R^2_\mu\setminus\R^1_\mu\neq\emptyset$.
In particular, we construct a trivial measure $\mu$ such that $\MLR_\mu=\atom_\mu$ and $\SR_\mu\neq\atom_\mu$.  As further evidence of the non-triviality of trivial measures, we study a degree structure associated with a given trivial $\mu$ called the \emph{$LR(\mu)$-degrees} and show that for each finite distributive lattice $(L,\leq)$, there is a computable trivial measure $\mu$ so that the collection of $LR(\mu)$-degrees is isomorphic to $(L,\leq)$. 

The outline of this paper is as follows.  In Section \ref{sec:background}, we provide the relevant technical background for the rest of the paper.  Next, in Section \ref{sec:tally}, we discuss the main technique for constructing trivial measures that we employ throughout this paper, defining these measures in terms of what we call \emph{tally functionals}.  In Section \ref{sec:separating}, we separate various notions of randomness via trivial measures, including the separation of Martin-L\"of randomness and Schnorr randomness that provides a counterexample to Schnorr's claim.  Lastly, in Section \ref{sec:fdl}, we discuss how to produce a trivial measure $\mu$ with an associated $LR(\mu)$-degree structure that is isomorphic to a given finite distributive lattice.

We assume that the reader is familiar with the basics of computability theory:  computable functions, partial computable functions, computably enumerable sets, Turing functionals, Turing degrees, the Turing jump, and so on (see, for instance, \cite{Soa87}), as well as the basics of effective randomness (otherwise, we refer the reader to~\cite{DowHir10} or ~\cite{Nie09}).  $\cs$ is the set of infinite binary sequences, also known as Cantor space.  $\str$ is the set of finite binary strings. $\binrat$ is the set of dyadic rationals, i.e., multiples of a negative power of~$2$.  Given $X\in \cs$ and an integer~$n$, $X \uh n$ is the string that consists of the first~$n$ bits of~$X$, and $X(n)$ is the $(n+1)$st of $X$ (so that $X(0)$ is the first bit of $X$).  If $\sigma,\tau\in\str$, then $\sigma\preceq\tau$ means that $\sigma$ is an initial segment of $\tau$; moreover, given $X\in\cs$, $\sigma\prec X$ means that $\sigma$ is an initial segment of~$X$. Given a string~$\sigma$, the basic open set determined by $\sigma$ is defined to be $\llb\sigma\rrb=\{X:\sigma\prec X\}.$ For $X,Y\in\cs$, we define $X\oplus Y=\{2n:n\in X\}\cup\{2n+1:n\in Y\}$.  Given a collection $\{B_i\}_{i\in\omega}\subseteq\cs$, we define $\bigoplus_{i\in\omega}B_i=\{\langle n,i\rangle:n\in B_i\}$, where $\langle\cdot,\cdot\rangle$ is some computable pairing function.  For $A\in\cs$, $A^{[i]}=\{n:\langle n,i\rangle\in A\}$, so that $A=\bigoplus_{i\in\omega}A^{[i]}$.
 
\section{Computable Measures and Randomness}\label{sec:background}

In this section, we review the relevant material on computability probability measures on $\cs$ and the various notions of algorithmic randomness given in terms of these measures.

\subsection{Computable measures on $\cs$}

A probability measure on $\cs$ assigns to each Borel subset of $\cs$ a real in $[0,1]$. It suffices to consider the restriction of probability measures to basic open subsets of $\cs$, for Caratheodory's theorem from classical measure theory ensures that a function~$\mu$ defined on basic open sets that satisfies $\mu(\llb\sigma\rrb)=\mu(\llb\sigma0\rrb)+\mu(\llb\sigma1\rrb)$ for all $\sigma\in\str$ can be uniquely extended to a probability measure on $\cs$. We can therefore represent measures as functions from strings to reals, where for all~$\sigma\in\str$, $\mu(\sigma)$ will denote the $\mu$-measure of ~$\llb\sigma\rrb$. This concise representation also allows us to talk about \emph{computable} probability measures. 

\begin{definition}
A probability measure $\mu$ on $\cs$ is \emph{computable} if $\sigma \mapsto \mu(\sigma)$ is computable as a real-valued function, i.e., there is a computable function $\appmu:\str\times\omega\rightarrow\binrat$ such that
\[|\mu(\sigma)-\appmu(\sigma,i)|\leq 2^{-i}\]
for every $\sigma\in\str$ and $i\in\omega$.%\footnote{For convenience we will write $\mu(\sigma)$ instead of $\mu([\sigma])$.} 
% We further say that $\mu$ is \emph{exactly computable} if for all~$\sigma$ $\mu(\sigma) \in \binrat$ and $\sigma \mapsto \mu(\sigma)$ is computable as a function from $\str$ to $\binrat$.
\end{definition}

In what follows~$\lambda$ will refer exclusively to the Lebesgue measure on~$\cs$, i.e., $\lambda(\sigma)=2^{-|\sigma|}$ for each $\sigma\in\str$.  Moreover, $\M_c$ denotes the collection of computable measures on $\cs$.

An important technique for defining a computable measure on $\cs$ is to induce the measure by means of some Turing functional $\Phi$.  To do so, it must be the case that $\Phi$ is \emph{almost total}, i.e., $\lambda(\dom(\Phi))=1$.  For our purposes, however, it suffices to restrict to truth-table functionals.

\begin{definition}
A Turing functional $\Phi: \subseteq \cs\rightarrow\cs$ is a \emph{truth-table functional} (or $tt$-functional) if $\Phi$ is total.
\end{definition}

\begin{definition}
Given a $tt$-functional $\Phi:\cs \rightarrow \cs$, the \emph{measure induced by $\Phi$}, denoted $\lambda_\Phi$, is defined to be
\[\lambda_\Phi(\mathcal{X})=\lambda(\Phi^{-1}(\mathcal{X}))\]
for every measurable $\mathcal{X}\subseteq\cs$.  
\end{definition}

\noindent It is not hard to show that $\lambda_\Phi\in\M_c$ for each $tt$-functional $\Phi$.

Apart from the Lebesgue measure, the computable measures that we will consider here are all atomic, and even trivial.

\begin{definition}
Let $\mu\in\M_c$.
\begin{itemize}
\item[($i$)] $\mu$ is \emph{atomic} if there is some sequence $A\in\cs$ such that $\mu(\{A\})>0$. 
\item[($ii$)] $A\in\cs$ an \emph{atom} of $\mu$, denoted $A\in\atom_\mu$, if $\mu(\{A\})>0$. 
\item[($iii$)] $\mu$ is \emph{atomless} if $\atom_\mu=\emptyset$.
\item[($iv$)] $\mu$ is \emph{trivial} if $\mu(\atom_\mu)=1$.
\end{itemize}
\end{definition}

\noindent We will also refer to the atoms of $\mu$ as \emph{$\mu$-atoms}.

\subsection{Notions of algorithmic randomness}

In this section we introduce Martin-L\"of randomness (and relativizations thereof), Schnorr randomness, and weak 2-randomness.  

\begin{definition}
Let $\mu\in\M_c$. 
\begin{itemize}
\item[($i$)] A \emph{$\mu$-Martin-L\"of test} is a uniformly computable sequence $(\mathcal{U}_i)_{i\in\omega}$ of effectively open classes in $\cs$ such that $\mu(\mathcal{U}_i)\leq 2^{-i}$ for every $i\in\omega$.  
\item[($ii$)] $X\in\cs$ is \emph{$\mu$-Martin-L\"of random} if for every $\mu$-Martin-L\"of test $(\mathcal{U}_i)_{i\in\omega}$, we have $X\notin\bigcap_{i\in\omega}\mathcal{U}_i$. 
\end{itemize}
\end{definition}

The collection of $\mu$-Martin-L\"of random sequences will be written as $\MLR_\mu$ (we will simply write $\MLR$ when considering the Lebesgue measure).  The following fact, the existence of a universal Martin-L\"of test, is well-known and will prove to be useful here.

\begin{proposition}
For every $\mu\in\M_c$, there is a Martin-L\"of test $(\widehat{\mathcal{U}}_i)_{i\in\omega}$ such that $x\in\MLR_\mu$ if and only if $x\notin\bigcap_{i\in\omega}\widehat{\mathcal{U}}_i$.
\end{proposition}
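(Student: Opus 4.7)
The plan is to carry out the familiar diagonal construction of a universal test, adapted to the computable measure $\mu$: effectively enumerate all candidate $\mu$-Martin-L\"of tests, modify each one so that it becomes a bona fide test without destroying the ones that were already valid, and then combine them into a single test via a geometric weighting.

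First, I would list all uniformly c.e.\ sequences $(\mathcal{V}^e_i)_{i\in\omega}$ of effectively open subsets of $\cs$, indexed by $e\in\omega$; every $\mu$-ML test arises in this enumeration. Because the condition ``$\mu(\mathcal{V}^e_i)\leq 2^{-i}$ for every $i$'' is only $\Pi^0_2$ in $e$, I cannot simply restrict to the valid tests, so instead I truncate each candidate. Using the approximation $\appmu$ furnished by computability of $\mu$, I process the enumeration of $\mathcal{V}^e_i$ one string at a time, admitting a newly listed $\sigma$ into a set $\mathcal{U}^e_i$ only if $\appmu$ certifies, to sufficient precision, that the resulting running union still has measure strictly below $2^{-i}$. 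Then each $(\mathcal{U}^e_i)_{i\in\omega}$ is a $\mu$-ML test uniformly in $e$, and by allowing candidate bounds with a safety margin (say $2^{-i-1}$) one can arrange that $\mathcal{U}^e_i=\mathcal{V}^e_i$ whenever the original candidate was already a valid test satisfying the stricter bound, so the enumeration still captures every $\mu$-ML test up to shrinking.

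Next, define
\[
\widehat{\mathcal{U}}_i=\bigcup_{e\in\omega}\mathcal{U}^e_{i+e+1},
\]
which is uniformly effectively open and satisfies $\mu(\widehat{\mathcal{U}}_i)\leq\sum_{e\in\omega}2^{-(i+e+1)}=2^{-i}$, so $(\widehat{\mathcal{U}}_i)_{i\in\omega}$ is itself a $\mu$-ML test. Any $x\notin\MLR_\mu$ is caught by some $\mu$-ML test, which after truncation still catches $x$ at some index $e$, i.e., $x\in\mathcal{U}^e_j$ for every $j$; in particular $x\in\mathcal{U}^e_{i+e+1}\subseteq\widehat{\mathcal{U}}_i$ for every $i$, so $x\in\bigcap_i\widehat{\mathcal{U}}_i$. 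The converse is immediate since $(\widehat{\mathcal{U}}_i)$ is itself a $\mu$-ML test. The one genuinely delicate step is the truncation: comparisons of $\mu$-measures with the dyadic bound $2^{-i}$ are only semidecidable, so one must run $\appmu$ to a precision depending on both $i$ and the stage at which $\sigma$ is enumerated (e.g.\ $2^{-i-n-2}$ for the $n$-th string) and use the safety margin so that genuine tests are preserved. This is the main technical point, but it is standard and relies only on the definition of a computable measure.
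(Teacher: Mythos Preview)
Your argument is correct and is the standard construction of a universal $\mu$-Martin-L\"of test. The paper itself does not prove this proposition; it is stated as a well-known fact without proof. So there is nothing to compare your approach against, but what you have written is exactly the argument one finds in the standard references (e.g., \cite{DowHir10} or \cite{Nie09}): effectively list all candidate tests, truncate each to force the measure bound using the computable approximation $\appmu$, and amalgamate via the usual shift $\widehat{\mathcal{U}}_i=\bigcup_e\mathcal{U}^e_{i+e+1}$.

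Your identification of the one genuine subtlety---that equality $\mu(\mathcal{V}^e_i)=2^{-i}$ cannot be verified, so one needs a safety margin and then recovers all tests only ``up to a shift''---is exactly right, and your handling of it (observing that any test $(\mathcal{V}_i)_i$ has a shifted version $(\mathcal{V}_{i+1})_i$ meeting the stricter bound $2^{-(i+1)}$, which the truncation therefore leaves intact) is the standard fix. One tiny quibble of presentation: the precision ``$2^{-i-n-2}$ for the $n$-th string'' is more specific than necessary; all that matters is that for each finite union with measure strictly below $2^{-i}$ you eventually certify this, which follows directly from computability of $\mu$.
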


We will make heavy use of the following lemma in Section \ref{sec:fdl}:

\begin{lemma}\label{lem:sum-meas}
Given $\mu,\nu\in\M_c$, if we set $\rho=\frac{\mu+\nu}{2}$,then 
\[
\MLR_\rho=\MLR_\mu\cup\MLR_\nu.\footnote{In general, we can consider any convex sum $\frac{\alpha\mu+(1-\alpha)\nu}{2}$ of $\mu$ and $\nu$, as long as $\alpha\in[0,1]$ is computable.} 
\]
\end{lemma}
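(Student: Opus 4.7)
The plan is to prove the two inclusions $\MLR_\mu \cup \MLR_\nu \subseteq \MLR_\rho$ and $\MLR_\rho \subseteq \MLR_\mu \cup \MLR_\nu$ separately, using in each case the key observation that $\mu(\mathcal{A}) \leq 2\rho(\mathcal{A})$ and $\nu(\mathcal{A}) \leq 2\rho(\mathcal{A})$ for every measurable $\mathcal{A}$, since $\rho = (\mu+\nu)/2$.

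For the inclusion $\MLR_\mu \cup \MLR_\nu \subseteq \MLR_\rho$, I would argue by contrapositive. Suppose $X \notin \MLR_\rho$, and let $(\mathcal{V}_i)_{i\in\omega}$ be any $\rho$-Martin-L\"of test with $X \in \bigcap_i \mathcal{V}_i$ (for instance, a universal $\rho$-test). Define $\mathcal{U}_i = \mathcal{V}_{i+1}$. Then $\mu(\mathcal{U}_i) \leq 2\rho(\mathcal{V}_{i+1}) \leq 2^{-i}$, so $(\mathcal{U}_i)_{i\in\omega}$ is a $\mu$-Martin-L\"of test; since $X \in \bigcap_i \mathcal{U}_i$, we get $X \notin \MLR_\mu$. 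The same shift argument applied with $\nu$ in place of $\mu$ yields $X \notin \MLR_\nu$, so $X \notin \MLR_\mu \cup \MLR_\nu$.

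For the reverse inclusion, suppose $X \notin \MLR_\mu$ and $X \notin \MLR_\nu$. Let $(\widehat{\mathcal{U}}_i)_{i\in\omega}$ and $(\widehat{\mathcal{V}}_i)_{i\in\omega}$ be universal $\mu$- and $\nu$-Martin-L\"of tests respectively, so that $X \in \bigcap_i \widehat{\mathcal{U}}_i$ and $X \in \bigcap_i \widehat{\mathcal{V}}_i$. Set $\mathcal{W}_i = \widehat{\mathcal{U}}_i \cap \widehat{\mathcal{V}}_i$. Then $(\mathcal{W}_i)_{i\in\omega}$ is uniformly effectively open, and by subadditivity
\[
\rho(\mathcal{W}_i) = \tfrac{1}{2}\bigl(\mu(\mathcal{W}_i)+\nu(\mathcal{W}_i)\bigr) \leq \tfrac{1}{2}\bigl(\mu(\widehat{\mathcal{U}}_i)+\nu(\widehat{\mathcal{V}}_i)\bigr) \leq 2^{-i},
\]
so $(\mathcal{W}_i)_{i\in\omega}$ is a $\rho$-Martin-L\"of test. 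Since $X$ lies in every $\mathcal{W}_i$, we conclude $X \notin \MLR_\rho$.

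There is really no serious obstacle here: both directions reduce to a one-line measure comparison together with the standard moves of reindexing a test or intersecting two tests. The only mild subtlety is on the reverse direction, where one must remember to intersect the two tests (rather than, say, take their union) so that the bound on $\mu(\widehat{\mathcal{U}}_i)$ and $\nu(\widehat{\mathcal{V}}_i)$ can be applied to the same set. The footnote generalization to $\rho = \alpha\mu + (1-\alpha)\nu$ with $\alpha \in (0,1)$ computable goes through by the same argument, with $2$ replaced by $\max(1/\alpha, 1/(1-\alpha))$ in the shift.
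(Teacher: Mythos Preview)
Your proof is correct and follows essentially the same approach as the paper: the paper also argues both inclusions by contrapositive, shifting the universal $\rho$-test by one index to get $\mu$- and $\nu$-tests in the first direction, and intersecting the universal $\mu$- and $\nu$-tests to obtain a $\rho$-test in the second direction. Your write-up is in fact slightly more explicit about the measure estimates than the paper's own proof.
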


\begin{proof}
If $X\notin\MLR_\rho$, then $X\in\bigcap_{i\in\omega}\mathcal{U}_i$, where $(\mathcal{U}_i)_{i\in\omega}$ is the universal $\rho$-Martin-L\"of test.  But since $\rho(\mathcal{U}_i)\leq 2^{-i}$ for each $i\in\omega$, it follows that $\mu(\mathcal{U}_{i+1})\leq 2^{-i}$ and $\nu(\mathcal{U}_{i+1})\leq 2^{-i}$, and hence it follows that $X\notin\MLR_\mu\cup\MLR_\nu$.

Conversely, if $X\notin\MLR_\mu\cup\MLR_\nu$, then $X\in\bigcap_{i\in\omega}\mathcal{U}_i$ and $X\in\bigcap_{i\in\omega}\mathcal{V}_i$, where $(\mathcal{U}_i)_{i\in\omega}$ is the universal $\mu$-Martin-L\"of test and $(\mathcal{V}_i)_{i\in\omega}$ is the universal $\nu$-Martin-L\"of test.  Then since $\mathcal{U}_i\cap\mathcal{V}_i\subseteq\mathcal{U}_i$ and $\mathcal{U}_i\cap\mathcal{V}_i\subseteq\mathcal{V}_i$, it follows that $(\mathcal{U}_i\cap\mathcal{V}_i)_{i\in\omega}$ is a $\rho$-Martin-L\"of test, and hence $X\notin\MLR_\rho$.
\end{proof}

We will also draw heavily on the fact that there are $\Delta^0_2$ Martin-L\"of random sequences.  Recall that a sequence $A\in\cs$ is $\Delta^0_2$ if there is a uniformly computable sequence of finite sets $(A_s)_{s\in\omega}$ (called a $\Delta^0_2$ approximation of $A$) such that
\[
\lim_{n\rightarrow\infty}A_s(n)=A(n)
\]
for every $n\in\omega$. For example, Chaitin's $\Omega$, defined by
\[
\Omega:=\sum_{U(\sigma)\halts}2^{-|\sigma|},
\]
where $U$ is a universal prefix-free Turing machine, is a $\Delta^0_2$ Martin-L\"of random sequence.  In fact, $\Omega\equiv_T\emptyset'$.

The following results concerning $\mu$-atoms and their relationship to $\mu$-Martin-L\"of randomness will be particularly useful.

\begin{proposition}\label{prop:atoms}
Let $\mu\in\M_c$ and $X\in\cs$.
\begin{itemize}
\item[($i$)] If $X\in\atom_\mu$, then $X$ is computable.
\item[($ii$)] If $X\in\atom_\mu$, then $X\in\MLR_\mu$.
\item[($iii$)] If $X$ is computable and $\mu(\{X\})=0$, then $X\notin\MLR_\mu$.
\end{itemize}
\end{proposition}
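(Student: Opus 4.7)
The plan is to handle the three parts in order of increasing difficulty: (ii) and (iii) follow quickly from the definitions, and the bulk of the work is in (i). For (ii), I would argue by contradiction: if $X\in\atom_\mu$ were to fail $\mu$-Martin-L\"of randomness, then $X$ would lie in every level $\widehat{\mathcal{U}}_i$ of the universal $\mu$-Martin-L\"of test, forcing $\mu(\{X\})\le\mu(\widehat{\mathcal{U}}_i)\le 2^{-i}$ for all $i$ and hence $\mu(\{X\})=0$, contradicting $X\in\atom_\mu$.

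For (iii), since $\{X\}=\bigcap_n\llb X\uh n\rrb$ is a decreasing intersection we have $\mu(\llb X\uh n\rrb)\to 0$. Using computability of both $X$ and $\mu$, for each $i$ I would effectively search for the least $n_i$ such that the approximation $\widehat\mu(X\uh n_i,i+2)$ falls below $2^{-i}-2^{-(i+2)}$, which guarantees $\mu(\llb X\uh n_i\rrb)<2^{-i}$; the search halts because the true measures tend to $0$. The cylinders $\mathcal{U}_i=\llb X\uh n_i\rrb$ then form a $\mu$-Martin-L\"of test capturing $X$ at every level.

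The hard part is (i), where we must construct a Turing machine computing $X$ from the computability of $\mu$ together with the single piece of information $\varepsilon:=\mu(\{X\})>0$. Since $\mu(\llb X\uh n\rrb)$ decreases monotonically to $\varepsilon$, I would pick $N$ large enough that $\mu(\llb X\uh N\rrb)<3\varepsilon/2$ and a dyadic rational $p\in\binrat$ with $\varepsilon/2<p<\varepsilon$, then hard-code $(N,X\uh N,p)$ into the machine; this is legitimate because (i) only asserts that \emph{some} machine computing $X$ exists. The key dichotomy for $n\geq N$ is that, given $X\uh n$, the correct extension satisfies $\mu(\llb X\uh n\cdot X(n)\rrb)\geq\varepsilon>p$ (since $X$ itself contributes point mass $\varepsilon$), while the wrong extension satisfies $\mu(\llb X\uh n\cdot(1-X(n))\rrb)\leq\mu(\llb X\uh N\rrb)-\varepsilon<\varepsilon/2<p$. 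Thus the machine inductively extends $X\uh n$ by approximating $\mu$ on both children to precision smaller than the fixed gap $\min(\varepsilon-p,p-\varepsilon/2)$ and outputting the child whose approximation exceeds $p$. The one subtlety to verify is that this gap does not shrink to zero as $n$ grows, which is precisely why $N$ is fixed once and for all and monotonicity of $n\mapsto\mu(\llb X\uh n\rrb)$ is invoked.
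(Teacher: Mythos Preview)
The paper does not actually give a proof of this proposition; it is stated as a well-known background fact, with only the one-line remark after the statement that the non-$\mu$-random sequences form a $\mu$-null set (which is exactly your argument for (ii)). So there is nothing to compare against, and your proposal stands on its own.

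Your arguments for all three parts are correct and are the standard ones. Two small comments on~(i). First, along with $(N,X\uh N,p)$ you should also hard-code a dyadic precision $\delta$ with $0<\delta<\min(\varepsilon-p,\,p-\varepsilon/2)$; the machine itself cannot compute this gap since it does not know $\varepsilon$, but as you note we only need \emph{some} machine to exist, so hard-coding $\delta$ is legitimate. With that in place, approximating each child to within $\delta$ guarantees the correct child's approximation exceeds $p$ and the wrong child's falls below $p$, exactly as you say. Second, the inequality you use for the wrong child, $\mu(\llb X\uh n\cdot(1-X(n))\rrb)\le\mu(\llb X\uh N\rrb)-\varepsilon$, is justified by combining $\mu(\llb X\uh n\rrb)\le\mu(\llb X\uh N\rrb)$ (monotonicity in $n\ge N$) with $\mu(\llb X\uh n\cdot X(n)\rrb)\ge\varepsilon$; you have the right pieces, just make the subtraction explicit when you write it out.
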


\noindent Part ($ii$) of the above proposition actually holds for all notions of randomness that we consider here, since for each such definition, the non-random sequences are captured in some set of $\mu$-measure zero.

We will also consider relativized versions of Martin-L\"of randomness.  For $A\in\cs$, a $\mu$-Martin-L\"of test relative to $A$ is simply a uniformly $A$-computable sequence $(\U_i^A)_{i\in\omega}$ of $\Sigma^{0,A}_1$ classes in $\cs$ such that $\mu(\U_i^A)\leq 2^{-i}$ for every $i\in\omega$.  Moreover, $X\in\cs$ is $\mu$-Martin-L\"of random relative to $A$, denoted $X\in\MLR_\mu^A$, if $X\notin\bigcap_{i\in\omega}\U_i^A$ for any $\mu$-Martin-L\"of test relative to $A$ $(\U_i^A)_{i\in\omega}$.

If we relativize Martin-L\"of randomness to the halting set $\emptyset'$, then this gives rise to 2-randomness.  We set $\twoMLR:=\MLR^{\emptyset'}$.  It is immediate that $X\in\twoMLR$ if and only if $X\in\MLR^A$ for some $A\equiv_T\emptyset'$.  It is not hard to show that $\twoMLR\subseteq\WtwoR$.

One of the central results concerning relative randomness is known as ``van Lambalgen's theorem."

\begin{theorem}[\cite{Lam90}]\label{thm:vlt}
For every $A,B\in\cs$, 
\[
A\oplus B\in\MLR\;\text{if and only if}\;A\in\MLR^B\;\&\;B\in\MLR.
\]
\end{theorem}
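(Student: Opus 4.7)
\emph{Forward direction.} Assuming $A \oplus B \in \MLR$, I would show $B \in \MLR$ and $A \in \MLR^B$ by contraposition, using symmetric cylinder/slice constructions. For $B$: given a $\lambda$-ML-test $(\mathcal{V}_i)_{i\in\omega}$ with $B \in \bigcap_i \mathcal{V}_i$, form $\widetilde{\mathcal{V}}_i := \{X \oplus Y : X \in \cs,\ Y \in \mathcal{V}_i\}$, which is uniformly effectively open with $\lambda(\widetilde{\mathcal{V}}_i) = \lambda(\mathcal{V}_i) \leq 2^{-i}$ and contains $A \oplus B$, contradicting $A \oplus B \in \MLR$. For $A$: given a $B$-relative ML-test $(\mathcal{U}^B_i)_{i\in\omega}$ capturing $A$, set $\mathcal{W}_i := \{X \oplus Y : X \in \mathcal{U}^Y_i\}$; uniform $\Sigma^{0,Y}_1$-ness in $Y$ keeps $\mathcal{W}_i$ effectively open unrelativized, and since each $Y$-slice has $\lambda$-measure $\leq 2^{-i}$, Fubini yields $\lambda(\mathcal{W}_i) \leq 2^{-i}$, so $(\mathcal{W}_i)_{i\in\omega}$ is an ordinary $\lambda$-ML-test containing $A \oplus B$.

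\emph{Backward direction.} Assuming $A \oplus B \notin \MLR$ and $B \in \MLR$, I would produce a $B$-relative ML-test capturing $A$. Pick any $\lambda$-ML-test $(\mathcal{U}_i)_{i\in\omega}$ with $A \oplus B \in \bigcap_i \mathcal{U}_i$, and for each $Y$ set $\mathcal{U}^Y_i := \{X : X \oplus Y \in \mathcal{U}_i\}$. By Fubini $\int \lambda(\mathcal{U}^Y_{2i})\,d\lambda(Y) \leq 2^{-2i}$, so Markov's inequality applied to
\[
\mathcal{V}_i := \{Y : \lambda(\mathcal{U}^Y_{2i}) > 2^{-i}\}
\]
gives $\lambda(\mathcal{V}_i) \leq 2^{-i}$. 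Uniform effective openness of $\mathcal{V}_i$ in $i$ comes from writing $\mathcal{U}_{2i}$ as an increasing union of computable clopen sets $C_{i,s}$: each $\lambda(\{X : X \oplus Y \in C_{i,s}\})$ depends only on a finite prefix of $Y$ and is computable from it, so the $Y$-prefixes witnessing $\mathcal{V}_i$ can be enumerated. Hence $(\mathcal{V}_i)_{i\in\omega}$ is a genuine $\lambda$-ML-test.

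Since $B \in \MLR$, there is $i_0$ with $B \notin \mathcal{V}_{i_0 + j}$ for every $j$, i.e.\ $\lambda(\mathcal{U}^B_{2(i_0+j)}) \leq 2^{-(i_0+j)} \leq 2^{-j}$. The sequence $(\mathcal{U}^B_{2(i_0+j)})_{j\in\omega}$ is then uniformly $\Sigma^{0,B}_1$ with $\lambda$-measures at most $2^{-j}$ and contains $A$, so it witnesses $A \notin \MLR^B$, completing the contrapositive.

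The main obstacle is verifying uniform effective openness of $(\mathcal{V}_i)$ in the backward direction; the key point is that slicing a clopen set via $\oplus$ preserves clopenness and finite-prefix dependence on the slicing coordinate. Once this is in hand, both directions reduce to Fubini/Markov bookkeeping, with Markov's inequality performing the split that separates a $\lambda$-failure for $B$ from a $B$-relative failure for $A$.
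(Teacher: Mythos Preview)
The paper does not prove Theorem~\ref{thm:vlt}; it is cited from~\cite{Lam90} as background and used as a black box. Your argument is the standard proof of van Lambalgen's theorem as found in, e.g., \cite{DowHir10} or \cite{Nie09}, and it is essentially correct.

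One small point deserves a sentence of justification. In the backward direction you assert that since $B\in\MLR$ there is an $i_0$ with $B\notin\mathcal{V}_{i_0+j}$ for every $j$. Passing a Martin-L\"of test only guarantees $B\notin\bigcap_i\mathcal{V}_i$, i.e.\ a \emph{single} $i$ with $B\notin\mathcal{V}_i$; to get cofinitely many you need either the Solovay-test characterization of $\MLR$ (since $\sum_i\lambda(\mathcal{V}_i)<\infty$, $B$ lies in only finitely many $\mathcal{V}_i$) or the equivalent trick of replacing $\mathcal{V}_i$ by $\bigcup_{j\geq i}\mathcal{V}_j$ to obtain a nested test. This is routine, but as written the step is a slight overreach. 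Once $i_0$ is fixed, the non-uniformity is harmless: the sequence $(\mathcal{U}^B_{2(i_0+j)})_{j\in\omega}$ is a legitimate $B$-relative test because $i_0$ is just a constant baked into its index.
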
  

\noindent A related result is the following.

\begin{theorem}[\cite{Kau91}]\label{thm:bigoplus}
Given $A\in\MLR$, then for each $i\in\omega$, $A^{[i]}\in\MLR^{\bigoplus_{j\neq i} A^{[j]}}$.  Moreover, for any finite $J\subseteq\omega$, $A^{[i]}\in\MLR^{\bigoplus_{j\in J}A^{[j]}}$ for every $i\notin J$.
\end{theorem}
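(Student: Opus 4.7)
The plan is to derive both claims directly from van Lambalgen's theorem (Theorem \ref{thm:vlt}) after first noting that Martin-L\"of randomness, both plain and relativized, is invariant under computable permutations of coordinates. Specifically, for any computable permutation $\pi$ of $\omega$ and any oracle $B$, the map $X\mapsto\pi_*(X)$ given by $\pi_*(X)(n)=X(\pi(n))$ is a computable, measure-preserving homeomorphism of $\cs$, so it carries $B$-Martin-L\"of tests to $B$-Martin-L\"of tests in both directions; hence $X\in\MLR^B$ iff $\pi_*(X)\in\MLR^B$. This invariance is standard: the only thing to verify is that the preimage under $\pi^{-1}$ of a uniformly $B$-c.e.\ sequence of effectively open classes is again uniformly $B$-c.e., which is immediate from the computability of $\pi$.

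For the main claim, set $A^{[-i]}=\bigoplus_{j\neq i}A^{[j]}$, and fix a computable bijection $\sigma:\omega\to\omega\setminus\{i\}$. Using $\sigma$ together with the pairing function $\langle\cdot,\cdot\rangle$, one can write down an explicit computable permutation $\pi$ of $\omega$ whose action rearranges the bits of $A=\bigoplus_{j\in\omega}A^{[j]}$ into those of $A^{[i]}\oplus A^{[-i]}$. By the invariance noted above, $A\in\MLR$ gives $A^{[i]}\oplus A^{[-i]}\in\MLR$; van Lambalgen's theorem then yields $A^{[i]}\in\MLR^{A^{[-i]}}$, proving the first assertion.

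For the ``moreover" clause, let $J\subseteq\omega$ be finite with $i\notin J$. Each column $A^{[j]}$ with $j\in J$ is a column of $A^{[-i]}$ and is uniformly recoverable from it, so $\bigoplus_{j\in J}A^{[j]}\leq_T A^{[-i]}$. Since every Martin-L\"of test computable from a weaker oracle is a fortiori computable from a stronger oracle, $\MLR^{A^{[-i]}}\subseteq\MLR^{\bigoplus_{j\in J}A^{[j]}}$; combining with the first part gives $A^{[i]}\in\MLR^{\bigoplus_{j\in J}A^{[j]}}$. There is no real obstacle here: the whole argument amounts to one application of van Lambalgen together with the permutation-invariance and downward-closure-in-the-oracle remarks.
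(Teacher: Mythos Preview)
Your argument is correct: the computable-permutation step reduces the first claim to a single application of van Lambalgen's theorem, and the ``moreover'' clause follows immediately from $\bigoplus_{j\in J}A^{[j]}\leq_T A^{[-i]}$ and the downward closure of relativized $\MLR$ in the oracle. This is the standard route.

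There is nothing to compare against, however: the paper does not prove Theorem~\ref{thm:bigoplus} but simply cites it from Kautz's dissertation~\cite{Kau91}. Your write-up would serve as a perfectly adequate proof sketch were one needed.
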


Next we turn to Schnorr randomness, the definition of which is a slight variant of the definition of Martin-L\"of randomness.

\begin{definition}
Let $\mu\in\M_c$. 
\begin{itemize}
\item[($i$)] A \emph{$\mu$-Schnorr test} is a uniformly computable sequence $(\mathcal{U}_i)_{i\in\omega}$ of effectively open classes in $\cs$ such that $\mu(\mathcal{U}_i)=2^{-i}$ for every $i\in\omega$.  
\item[($ii$)] $X\in\cs$ is \emph{$\mu$-Schnorr random} if for every $\mu$-Schnorr test $(\mathcal{U}_i)_{i\in\omega}$, we have $X\notin\bigcap_{i\in\omega}\mathcal{U}_i$. 
\end{itemize}
\end{definition}

Let $\SR_\mu$ be the class of $\mu$-Schnorr random sequences.  One can readily observe that $\MLR_\mu\subseteq\SR_\mu$ for each $\mu\in\M_c$.  
In order to separate Martin-L\"of randomness and Schnorr randomness with respect to a trivial computable measure $\mu$, we need the following result. 

\begin{proposition}[Nies, Stephan, Terwijn \cite{NieSteTer05}]\label{prop:sr-high}
Given $\mu\in\M_c$, every $X\in\SR_\mu\setminus\MLR_\mu$ is high, i.e. $X$ computes a function that dominates all computable functions.
\end{proposition}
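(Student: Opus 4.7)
The plan is to produce, from $X\in \SR_\mu\setminus \MLR_\mu$, a function $h_X\leq_T X$ that dominates every total computable function; by Martin's characterization of highness, this is what the proposition requires. Since $X\notin \MLR_\mu$, there is a $\mu$-Martin-L\"of test $(\mathcal{U}_n)_{n\in\omega}$ with $X\in \bigcap_n \mathcal{U}_n$. Write each $\mathcal{U}_n$ as an increasing union $\bigcup_s \mathcal{U}_{n,s}$ of uniformly computable clopen approximations, and define
\[
h_X(n)=\min\{s:X\in \mathcal{U}_{n,s}\}.
\]
Because membership of $X$ in a clopen set is decidable from a sufficiently long initial segment of $X$, the function $h_X$ is total and $X$-computable.

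To show $h_X$ dominates every computable function, I would argue by contradiction. Suppose $f:\omega\to\omega$ is computable and $h_X(n)<f(n)$ for infinitely many $n$. Setting $\mathcal{V}_n=\mathcal{U}_{n,f(n)}$ produces a uniformly computable sequence of clopen sets, each with computable $\mu$-measure at most $2^{-n}$, and with $X\in \mathcal{V}_n$ for infinitely many $n$. Hence
\[
X\in \bigcap_{k\in\omega}\mathcal{W}_k\quad\text{where}\quad \mathcal{W}_k=\bigcup_{n\geq k}\mathcal{V}_n.
\]
Each $\mathcal{W}_k$ is uniformly $\Sigma^0_1$ with $\mu(\mathcal{W}_k)\leq 2^{-k+1}$; moreover $\mu(\mathcal{W}_k)$ is uniformly computable, since $\mu\bigl(\bigcup_{n=k}^{k+N}\mathcal{V}_n\bigr)$ is computable and the tail error is at most $\sum_{n>k+N}2^{-n}=2^{-k-N}$. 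Thus $(\mathcal{W}_{k+1})_{k\in\omega}$ is a uniform sequence of effectively open sets with uniformly computable measure at most $2^{-k}$, and it captures $X$.

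The main subtlety is the last conversion step: the paper's definition of a $\mu$-Schnorr test demands $\mu(\mathcal{U}_i)=2^{-i}$ exactly, not merely a uniformly computable bound. I would appeal to the standard equivalence (going back to Schnorr) that these two formulations yield the same class of Schnorr null sets and hence the same notion of $\mu$-Schnorr randomness; with this in hand, capture by $(\mathcal{W}_{k+1})_{k}$ contradicts $X\in \SR_\mu$. Padding each $\mathcal{W}_k$ up to exact measure $2^{-k}$ directly is possible but delicate for an arbitrary computable $\mu$ (for example when $\mu$ is trivial, not every dyadic value is realized by a clopen set), so the equivalence is the cleaner route.
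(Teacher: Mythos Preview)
Your proof is correct and follows essentially the same approach as the paper: define the entry-time function into the levels of a $\mu$-Martin-L\"of test capturing $X$, and argue by contradiction that a computable function escaping domination yields a Schnorr-type test capturing $X$. The only cosmetic differences are that the paper uses the \emph{universal} $\mu$-Martin-L\"of test (a choice it later exploits in Section~\ref{sec:separating}) and cites \cite[Theorem 7.1.10]{DowHir10} directly for the ``infinitely often in a finite-measure test implies not Schnorr random'' step, whereas you spell out the tail-union $\mathcal{W}_k$ explicitly before invoking the equivalence of Schnorr-test formulations.
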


We will review the proof of this proposition, as the details will be useful when we provide a counterexample to Schnorr's claim in Section \ref{sec:separating}.

\begin{proof}
Let $X\in\SR_\mu\setminus\MLR_\mu$, and let $(\mathcal{U}_i)_{i\in\omega}$ be a universal $\mu$-Martin-L\"of test.  Then we define $f\leq_T X$ as follows:
\[
f(n)=\;\text{the least $s$ such that}\;(\exists k)\llb X\uh k\rrb\subseteq\mathcal{U}_{n,s}.
\]
We claim that $f$ dominates all computable functions.  Suppose, for the sake of contradiction, that there is some computable function $g$ such that $f(n)\leq g(n)$ for infinitely many $n$.  Then $(\mathcal{U}_{n,g(n)})_{n\in\omega}$ is in fact a Schnorr test, and moreover, there are infinitely many $n$ such that $X\in\mathcal{U}_{n,g(n)}$, which is, in fact, sufficient to show that $X$ is not Schnorr random (for instance, see \cite[Theorem 7.1.10]{DowHir10}).  Thus, $X$ computes a function that dominates all computable functions, and hence $X$ is high.
\end{proof}

Another definition of randomness that we consider here is weak 2-randomness, first introduced by Kurtz in his dissertation \cite{Kur81}.

\begin{definition}
Let $\mu\in\M_c$. 
\begin{itemize}
\item[($i$)] A \emph{generalized $\mu$-Martin-L\"of test} is a uniformly computable sequence $(\mathcal{U}_i)_{i\in\omega}$ of effectively open classes in $\cs$ such that $\lim_{i\rightarrow\infty}\mu(\mathcal{U}_i)=0$.
\item[($ii$)] $X\in\cs$ is \emph{$\mu$-weakly 2-random} if for every generalized $\mu$-Martin-L\"of test $(\mathcal{U}_i)_{i\in\omega}$, we have $X\notin\bigcap_{i\in\omega}\mathcal{U}_i$. 
\end{itemize}
\end{definition}

Let $\WtwoR_\mu$ denote the collection of $\mu$-weakly 2-random sequences.  Note that every generalized $\mu$-Martin-L\"of test $(\U_i)_{i\in\omega}$ yields a $\Pi^0_2$ class of $\mu$-measure zero, namely $\bigcap_i\U_i$, and conversely, every $\Pi^0_2$ class of $\mu$-measure zero can be obtained in this way by a generalized $\mu$-Martin-L\"of test.  It is immediate that $\WtwoR_\mu\subseteq\MLR_\mu$, since the collection of sequences captured by a $\mu$-Martin-L\"of defines a $\Pi^0_2$ class of $\mu$-measure zero.  We now consider the key result concerning the relationship between $\WtwoR_\mu$ and $\MLR_\mu$.

\begin{definition}
$X,Y\in\cs$ form a \emph{minimal pair} in the Turing degrees if $A<_T X$ and $A<_T Y$ implies that $A\equiv_T\emptyset$.
\end{definition}

\begin{theorem}[Downey, Nies, Weber, Yu \cite{DowNieWeb06}; Hirschfeldt, Miller (unpublished)]\label{thm:w2r-minpair}
For $\mu\in\M_c$, if $X$ is not computable, then $X\in\WtwoR_\mu$ if and only if $X\in\MLR_\mu$ and $X$ and $\emptyset'$ form a minimal pair.
\end{theorem}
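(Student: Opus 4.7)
The plan is to prove the two directions separately. I expect the forward direction to be straightforward thanks to the $\Pi^0_2$ descriptive power afforded by $\Delta^0_2$-approximations, while the backward direction will require genuine work to extract a non-computable $\Delta^0_2$ set from the failure of weak $2$-randomness.

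For $(\Rightarrow)$: the inclusion $\WtwoR_\mu\subseteq\MLR_\mu$ is immediate (any $\mu$-Martin-L\"of test is a generalized one), so $X\in\MLR_\mu$. For the minimal-pair condition I will argue by contradiction: assume some non-computable $A$ satisfies $A=\Phi^X$ for a Turing functional $\Phi$ and has a $\Delta^0_2$ approximation $(A_s)_s$. The key object to define is
\[
\mathcal{P}=\bigcap_{n\in\omega}\bigcup_{s\geq n}\{Y:A_s\uh s\prec\Phi^Y\},
\]
which is manifestly $\Pi^0_2$. I will then verify three things: that $X\in\mathcal{P}$ (using $\Phi^X=A$ and the fact that $A_s\uh s=A\uh s$ for $s$ past the modulus); that $\mathcal{P}\subseteq\Phi^{-1}(\{A\})$ (because the pointwise convergence $A_s\uh s\to A$ forces $\Phi^Y=A$ for any $Y$ whose image extends infinitely many $A_s\uh s$); and that the pushforward semi-measure $\mu_\Phi=\mu\circ\Phi^{-1}$ is lower-semicomputable with all atoms necessarily computable (an atom of mass $>\delta$ is eventually pinned down uniquely by the measures of its initial segments). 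Thus $\mu(\mathcal{P})\leq\mu_\Phi(\{A\})=0$, contradicting $X\in\WtwoR_\mu$.

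For $(\Leftarrow)$: I will argue the contrapositive, showing that if $X\in\MLR_\mu\setminus\WtwoR_\mu$ then $X$ computes a non-computable $\Delta^0_2$ set $A$, which will break the minimal-pair assumption since $A\leq_T X$, $A\leq_T\emptyset'$ and $A$ is non-computable. First I would fix a nested generalized $\mu$-Martin-L\"of test $(\mathcal{U}_n)$ with $X\in\bigcap_n\mathcal{U}_n$. Because $\mu(\mathcal{U}_n)$ is lower-semicomputable and tends to $0$, the oracle $\emptyset'$ can select a subsequence $n_0<n_1<\cdots$ with $\mu(\mathcal{U}_{n_k})<2^{-k}$, giving a $\mu$-Martin-L\"of test relative to $\emptyset'$ that captures $X$; hence $X\notin\MLR_\mu^{\emptyset'}$.

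The hard part---and the main obstacle---will be extracting from this failure an actual non-computable $\Delta^0_2$ set $A\leq_T X$. My approach is to use $X$ as oracle to locate, for each $k$, the initial segment $\sigma_k\prec X$ witnessing $X\in\mathcal{U}_{n_k}$, and to encode the ``schedule'' of these witnesses into $A$. The $\Delta^0_2$-ness of $A$ will come from the $\emptyset'$-computability of the subsequence; the inequality $A\leq_T X$ will come from $X$ locating its own witnesses; and crucially, non-computability of $A$ will follow because a computable $A$ would let one effectivize the whole construction into an unrelativized $\mu$-Martin-L\"of test capturing $X$, contradicting $X\in\MLR_\mu$. Keeping the $X$- and $\emptyset'$-dependencies properly separated is the technically delicate point, and is the substance of the Hirschfeldt--Miller argument carried out in~\cite{DowNieWeb06}.
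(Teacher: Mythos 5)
Your forward direction follows the same skeleton as the paper's (Theorem \ref{thm-w2r-minpair-1}): surround the computation $\Phi^X=A$ with a $\Pi^0_2$ class, use weak $2$-randomness to conclude it has positive $\mu$-measure, and deduce that $A$ is computable; and your endgame---reading the positive measure as an atom of the lower-semicomputable pushforward $\mu_\Phi$ and noting that such atoms are computable---is a legitimate replacement for the paper's split into an atom case plus a majority-vote argument. The gap is the verification that $X\in\mathcal{P}$. There is no stage ``past the modulus'' after which $A_s\uh s=A\uh s$: the modulus $m(n)$ only guarantees $A_s\uh n=A\uh n$ for $s\geq m(n)$, and nothing forces $m(s)\leq s$ at even a single large stage. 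For instance, replacing $A$ by $C$ with $C(2n)=A(n)$, $C(2n+1)=1-A(n)$ and approximating by $C_s(2n)=A_s(n)$, $C_s(2n+1)=1-A_s(n)$, except that $C_s(2k_s+1):=A_s(k_s)$ for $k_s=\lfloor s/4\rfloor$, gives a valid $\Delta^0_2$ approximation of a non-computable set for which $C_s\uh s\neq C\uh s$ at every large stage; for such an approximation your class $\mathcal{P}$ omits $X$ altogether, so no contradiction with $X\in\WtwoR_\mu$ is obtained, and whether the \emph{given} set admits any approximation with $A_s\uh s=A\uh s$ infinitely often is a nontrivial claim you never address. The standard repair is to quantify bit by bit rather than by length-$s$ blocks, as in the paper's class $\S=\{Y:\forall n\,\forall s\,\exists t>s\,(\Phi^Y(n)[t]\halts=A_t(n))\}$, which equals $\{Y:\Phi^Y=A\}$ and contains $X$ for every choice of approximation; with that change your version of this direction goes through.

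The backward direction is where the real content lies, and your proposal does not supply it. Establishing $X\notin\MLR_\mu^{\emptyset'}$ is beside the point: failure of $2$-randomness does not by itself yield a non-computable set below both $X$ and $\emptyset'$. Your proposed extraction---encode the $X$-located witnesses $\sigma_k$ into a set $A$, with ``$\Delta^0_2$-ness coming from the $\emptyset'$-computability of the subsequence''---does not work as described: the witnesses and the stages at which they appear are computed from $X$, so the encoded object is $X$-computable, but there is no reason it is $\emptyset'$-computable, and its non-computability is asserted rather than argued. The actual argument (the unpublished Hirschfeldt--Miller result, stated as Theorem \ref{thm:w2r-minpair-2} in the paper, with proof in \cite[Theorem 7.2.11]{DowHir10}; note that \cite{DowNieWeb06} contains the other direction, so your deferral is also misattributed) runs in the opposite direction: given the $\Pi^0_2$ $\mu$-null class $\S$ witnessing $X\notin\WtwoR_\mu$, one constructs a single non-computable c.e.\ set $A$ (hence automatically $A\leq_T\emptyset'$) together with a uniform procedure computing $A$ from every non-computable member of $\MLR_\mu\cap\S$, using membership in the components of the null class to decode $A$. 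The only $\mu$-specific point, which the paper makes explicit, is that non-computable members of $\MLR_\mu$ lying in a $\mu$-null class cannot be atoms, so the Lebesgue-measure proof transfers. As written, your treatment of this direction is a sketch of a different and non-working construction plus an appeal to the literature, not a proof.
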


The proof of this result as found in the literature is only for the case of the Lebesgue measure, but it is straightforward to extend it to any $\mu\in\M_c$. We follow the original proof, with several modifications.

\begin{theorem}[Downey, Nies, Weber, and Yu \cite{DowNieWeb06}]\label{thm-w2r-minpair-1}
For $\mu\in\M_c$, if $X\in\WtwoR_\mu$ and $X$ is not computable, then $X$ and $\emptyset'$ form a minimal pair.
\end{theorem}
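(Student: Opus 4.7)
We argue by contradiction. Suppose $X \in \WtwoR_\mu$ is noncomputable but $X$ and $\emptyset'$ do not form a minimal pair. Then there exists a noncomputable $A$ with $A \leq_T X$, via a Turing functional $\Phi$ such that $\Phi^X = A$, and $A \leq_T \emptyset'$, via a $\Delta^0_2$ approximation $(A_s)_{s \in \omega}$. The plan is to exhibit a $\Pi^0_2$ class of $\mu$-measure $0$ containing $X$; by the equivalence noted immediately before the statement, this yields a generalized $\mu$-Martin-L\"of test capturing $X$, contradicting $X \in \WtwoR_\mu$.

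For each pair $(n,s)$, let
\[
\mathcal{C}_{n,s} \;=\; \{Z : \Phi_s^Z \uh n \halts \text{ and equals } A_s \uh n\},
\]
a clopen set, uniformly computable in $(n,s)$. Set $\mathcal{E}_n = \bigcap_m \bigcup_{s \geq m} \mathcal{C}_{n,s}$ and $\mathcal{E} = \bigcap_n \mathcal{E}_n$; both are $\Pi^0_2$ classes. Since $\Phi^X = A$, for each fixed $n$ the computation $\Phi^X_s \uh n$ stabilizes to $A \uh n$ and the approximation $A_s \uh n$ also stabilizes to $A \uh n$, so $X \in \mathcal{C}_{n,s}$ for all sufficiently large $s$; hence $X \in \mathcal{E}$. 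Conversely, if $Z \in \mathcal{E}$, then the monotonicity of $\Phi^Z_s$ in $s$ ensures that each value $\Phi^Z(k)$ is defined, and the fact that $\Phi^Z(k) = A_s(k)$ for infinitely many $s$ combined with $A_s(k) \to A(k)$ forces $\Phi^Z(k) = A(k)$. Thus $\mathcal{E} \subseteq \Phi^{-1}(\{A\})$.

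The key remaining step, and what I expect to be the main obstacle, is showing $\mu(\Phi^{-1}(\{A\})) = 0$. This is a general fact about any computable probability measure $\mu$ and any Turing functional $\Phi$: if $A$ is not computable, then $\mu(\Phi^{-1}(\{A\})) = 0$. The standard proof applies the Lebesgue differentiation theorem for $\mu$ to locate a cylinder $\llb \sigma \rrb$ in which $\Phi^{-1}(\{A\})$ has $\mu$-density arbitrarily close to $1$; then each bit $A(k)$ can be read off as the $i \in \{0,1\}$ maximizing $\mu(\llb \sigma \rrb \cap \{Z : \Phi^Z(k) = i\})$, yielding a computable procedure for $A$ and contradicting its noncomputability. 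The ``straightforward adaptation'' from the Lebesgue case to arbitrary computable $\mu$ that the paper alludes to lives precisely here; it relies on the computability of $\mu$ together with the lower semicomputability of $\mu$ on c.e.\ open sets to substitute for the symmetries used in the original Lebesgue-measure argument of Downey, Nies, Weber, and Yu.

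Combining these ingredients, $\mathcal{E}$ is a $\Pi^0_2$ class of $\mu$-measure $0$ containing $X$, contradicting $X \in \WtwoR_\mu$. Hence no noncomputable $A$ can lie below both $X$ and $\emptyset'$, so $X$ and $\emptyset'$ form a minimal pair.
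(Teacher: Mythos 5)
Your proof is correct and is essentially the paper's argument: you build the same $\Pi^0_2$ class of oracles whose $\Phi$-computations infinitely often agree with the approximation $(A_s)$, and you rely on the same majority-vote/density argument (adapted from the Lebesgue case to computable $\mu$, with atoms handled by the fact that atoms of a computable measure are computable) to conclude that this class is $\mu$-null when $A$ is noncomputable. The only difference is organizational: the paper argues directly that positive measure of the class forces $A$ to be computable, while you take the contrapositive and derive a generalized $\mu$-Martin-L\"of test capturing $X$; the substance is the same.
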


\begin{proof}[Sketch]
We modify the proof given by Downey, Nies, Weber, and Yu for the case that $\mu=\lambda$.  If $A\in\cs$ is $\Delta^0_2$, $Z\in\WtwoR_\mu$, and $\Phi^Z=A$ for some Turing functional $A$,  then we argue that $A$ is computable.  Towards this end, we define
\[
\S=\{X:\forall n\forall s\exists t>s(\Phi^X(n)[t]\halts=A_t(n)\},
\]
which is $\Pi^0_2$ and contains $Z$.  Since $Z\in\WtwoR_\mu$, it follows that $\mu(\S)>0$.  Now the only difference between the original proof and the situation here is that $\mu$ may be atomic, so that there is some $\mu$-atom $Y\in\S$.  But in this case we are done: since $\mu$ is computable, it follows that $Y$ is computable.  Then $\Phi^Y=A$, and hence $A$ is computable.

In the case that $\S$ contains no atoms, the proof proceeds exactly as in the case of the Lebesgue measure:  by a ``majority vote" argument, which shows that that one can compute values of $A$ using the majority of sequences in a set of positive measure, one shows that $A$ is computable.  See, for instance, the proof of Theorem 7.2.8. in \cite{DowHir10} for details.
\end{proof}

\begin{theorem}\label{thm:w2r-minpair-2}[Hirschfeldt, Miller (unpublished)]
Let $\mu\in\M_c$.  For any $\Sigma^0_3$ class $\S\subseteq\cs$ such that $\mu(\S)=0$, there is a noncomputable c.e.\ set $A$ such that $A\leq_T X$ for every non-computable $X\in\MLR_\mu\cap\S$.
\end{theorem}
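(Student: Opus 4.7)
The plan is to adapt the Hirschfeldt-Miller construction for the Lebesgue measure to a general $\mu\in\M_c$, using only the computability of $\mu$ on basic clopen sets. First, decompose the $\Sigma^0_3$ null class as $\S=\bigcup_{n\in\omega}\P_n$, where the $\P_n$ are uniformly $\Pi^0_2$ and each is $\mu$-null (as a subclass of $\S$). Write each $\P_n=\bigcap_{k\in\omega}\V_{n,k}$ as a decreasing intersection of a uniformly $\Sigma^0_1$ sequence of open classes; then $\mu(\V_{n,k})\to 0$ as $k\to\infty$, though not effectively in general.

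Build $A$ as a c.e. set stagewise, along with uniform Turing functionals $(\Gamma_n)_{n\in\omega}$, aiming for $\Gamma_n^X=A$ whenever $X\in\MLR_\mu\cap\P_n$ is noncomputable. For each diagonalization requirement $N_e: A\neq\Phi_e$, reserve a witness $m_e$. At stage $s$, if $\Phi_e(m_e)[s]\halts=0$ and $m_e\notin A_s$, attempt to enumerate $m_e$; permit this only when, for each $n\leq e$, some string $\tau$ has been observed in the stage-$s$ approximation of $\V_{n,m_e}$, and a cumulative cost bound (computed using the effective approximation of $\mu$ on clopens) remains finite. The functional $\Gamma_n^X(m)$ then searches initial segments of $X$ for the first one seen to enter $\V_{n,m}$ and outputs $A_s(m)$ at that stage.

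The verification has three parts. First, $A$ is c.e. by construction, and it is noncomputable because $\mu(\V_{n,m})\to 0$ and every $\P_n$ contributes only finitely often to any single witness $m_e$, so the required permissions eventually arrive and each $N_e$ is satisfied. Second, for any noncomputable $X\in\MLR_\mu\cap\P_n$, the search in $\Gamma_n^X(m)$ terminates since $X$ has an initial segment in every $\V_{n,k}$, and by construction it reads off the correct value $A(m)$. Third, consistency along $X$: if incompatible values were enumerated on behalf of distinct prefixes of $X$, the associated permission clopens would assemble into a $\mu$-Martin-L\"of test capturing $X$, contradicting $X\in\MLR_\mu$. Note that since Proposition \ref{prop:atoms}(i) guarantees that noncomputable $X$'s are not $\mu$-atoms, no atom-specific pathology arises.

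The principal obstacle is balancing the cost control so as to guarantee consistency without starving the diagonalization of permissions; the witnesses $m_e$ and cost thresholds must be chosen so that the total $\mu$-measure of permission clopens along any single $X$ remains summable. This calculation uses only the computability of $\mu$ on clopens, which is the same ingredient as in the original Lebesgue-measure proof; the only substantive modification is to replace $\lambda$ by $\mu$ in the measure bookkeeping, so the adaptation goes through essentially verbatim.
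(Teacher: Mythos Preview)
Your proposal is correct and follows exactly the approach the paper takes: the paper simply observes that the Hirschfeldt--Miller construction for Lebesgue measure goes through verbatim with $\lambda$ replaced by $\mu$, the only point to check being that a noncomputable $X\in\MLR_\mu\cap\S$ cannot be a $\mu$-atom (which you note via Proposition~\ref{prop:atoms}(i), and the paper notes via $\mu(\{X\})\leq\mu(\S)=0$). Your sketch of the cost-function construction and the three verification points accurately reflects the standard argument referenced in the paper.
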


The proof of this result proceeds exactly in the same way as the case of the Lebesgue measure, since $X\in\MLR_\mu\cap\S$ implies that $\mu(\{X\})=0$ and hence $X$ is not a $\mu$-atom.  See the proof of Theorem 7.2.11 of \cite{DowHir10}.\\

\begin{proof}[Proof of Theorem \ref{thm:w2r-minpair}]
($\Rightarrow$) This is simply Theorem \ref{thm-w2r-minpair-1}.

\noindent
($\Leftarrow$) Suppose that $X\in\MLR_\mu\setminus\WtwoR_\mu$.  Then there is a $\Pi^0_2$ $\mu$-null set $\S$ such that $X\in\S$, and so by Theorem \ref{thm:w2r-minpair-2}, there is some non-computable c.e.\ set $A$ such that $A\leq_T X$.  Therefore, $X$ and $\emptyset'$ do not form a minimal pair.
\end{proof}

\subsection{Randomness and Turing functionals}

Many of the results in the sequel depend crucially on the following preservation of randomness theorem, originally due to Levin and Zvonkin \cite{ZvoLev70}.

\begin{theorem}[Preservation of Martin-L\"of Randomness]\label{thm:preservation-mlr}
If $\Phi$ is a $tt$-functional, then $X\in\MLR$ implies $\Phi(X)\in\MLR_{\lambda_\Phi}$.
\end{theorem}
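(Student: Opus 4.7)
The plan is to prove the contrapositive: if $\Phi(X) \notin \MLR_{\lambda_\Phi}$, then $X \notin \MLR$. The strategy is to pull back a $\lambda_\Phi$-Martin-L\"of test capturing $\Phi(X)$ through $\Phi$ to obtain a $\lambda$-Martin-L\"of test capturing $X$.

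Assume $\Phi(X) \notin \MLR_{\lambda_\Phi}$, so by the universal test (or directly by definition), there is a $\lambda_\Phi$-Martin-L\"of test $(\V_i)_{i\in\omega}$ such that $\Phi(X) \in \bigcap_{i\in\omega} \V_i$. Define
\[
\U_i = \Phi^{-1}(\V_i) = \{Y \in \cs : \Phi(Y) \in \V_i\}
\]
for each $i \in \omega$. Since $X \in \U_i$ for every $i$, the sequence $X$ lies in the intersection $\bigcap_{i\in\omega}\U_i$. It therefore suffices to verify that $(\U_i)_{i\in\omega}$ is a $\lambda$-Martin-L\"of test.

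Two things need checking. First, $\U_i$ is effectively open uniformly in $i$. Because $\Phi$ is a $tt$-functional, $\Phi$ is total and continuous, and one can computably enumerate from an index for $\V_i$ a prefix-free set of strings $\{\sigma_k\}$ whose basic open sets cover $\U_i$: namely, whenever a finite stage of computation certifies that $\Phi(\sigma)$ extends some string $\tau$ with $\llb\tau\rrb \subseteq \V_i$, enumerate $\sigma$. This is standard and uses nothing more than that $\Phi$ is total and computable, giving uniform indices for $\U_i$ as an effectively open class. Second, the measure bound: by the very definition of the induced measure $\lambda_\Phi$,
\[
\lambda(\U_i) = \lambda(\Phi^{-1}(\V_i)) = \lambda_\Phi(\V_i) \leq 2^{-i}.
\]
Hence $(\U_i)_{i\in\omega}$ is a $\lambda$-Martin-L\"of test capturing $X$, so $X \notin \MLR$, completing the contrapositive.

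The only step requiring care is the first bullet above, namely that the preimage of an effectively open set under a $tt$-functional is effectively open uniformly; this is where totality of $\Phi$ is used essentially (for a merely almost total functional one would additionally need to argue that the preimage bound still holds and that the enumeration procedure terminates on sufficient mass). Everything else is just unwinding definitions: the measure equality is built into the definition of $\lambda_\Phi$, and the bound $\lambda_\Phi(\V_i) \leq 2^{-i}$ is inherited from the test $(\V_i)_{i\in\omega}$.
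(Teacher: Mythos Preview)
Your proof is correct and follows essentially the same approach as the paper: the paper sketches exactly this argument, noting that the preimage under $\Phi$ of a $\lambda_\Phi$-Martin-L\"of test is a $\lambda$-Martin-L\"of test (with the measure bound coming directly from the definition of $\lambda_\Phi$), so that if $\Phi(X)$ is captured by the original test then $X$ is captured by the pulled-back test. Your write-up simply fills in the details of that sketch.
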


\noindent We will also use the following variant established by Bienvenu and Porter \cite{BiePor12}.

\begin{theorem}[Preservation of Schnorr Randomness]\label{thm:preservation-sr}
If $\Phi$ is an $tt$-functional, then $X\in\SR$ implies $\Phi(X)\in\SR_{\lambda_\Phi}$.
\end{theorem}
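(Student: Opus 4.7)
The plan is to mimic the proof of the preservation of Martin-L\"of randomness (Theorem \ref{thm:preservation-mlr}), pulling back a putative $\lambda_\Phi$-Schnorr test through $\Phi$. Specifically, assume for contradiction that $\Phi(X)\notin\SR_{\lambda_\Phi}$, so there is a $\lambda_\Phi$-Schnorr test $(\V_i)_{i\in\omega}$ with $\Phi(X)\in\bigcap_{i\in\omega}\V_i$. I then define $\U_i:=\Phi^{-1}(\V_i)$ and argue that $(\U_i)_{i\in\omega}$ is a Schnorr test with respect to Lebesgue measure. Since $\Phi(X)\in\V_i$ exactly when $X\in\U_i$, this would place $X$ in $\bigcap_i\U_i$, contradicting $X\in\SR$.

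Two verifications are needed for $(\U_i)_{i\in\omega}$ to be a Schnorr test. First, each $\U_i$ must be effectively open uniformly in $i$: writing $\V_i=\bigcup_n\llb\tau^i_n\rrb$ for a uniformly c.e.\ double sequence of strings, we have $\U_i=\bigcup_n\Phi^{-1}(\llb\tau^i_n\rrb)$, and each $\Phi^{-1}(\llb\tau\rrb)$ is $\Sigma^0_1$ since $\Phi$ is a total computable functional (the condition $\tau\prec\Phi(X)$ is witnessed by a finite initial segment of $X$ via the use of $\Phi$). Uniformity in $i$ is automatic. Second, we need $\lambda(\U_i)=2^{-i}$, and this is immediate from the very definition of the induced measure:
\[
\lambda(\U_i)=\lambda(\Phi^{-1}(\V_i))=\lambda_\Phi(\V_i)=2^{-i}.
\]

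The main (in fact, only) subtlety is in verifying the exact measure condition, and this is precisely where the restriction to $tt$-functionals does the work: for a merely almost-total partial functional, the set where $\Phi$ is undefined is a $\lambda$-null $\Pi^0_2$ set that can nonetheless ``absorb'' some measure from the preimage, potentially making $\lambda(\U_i)$ a non-computable real rather than exactly $2^{-i}$, and the Schnorr-test definition is unforgiving about this. Totality of $\Phi$ eliminates the issue, making the argument a direct parallel of Theorem \ref{thm:preservation-mlr} with the inequality $\mu(\U_i)\le 2^{-i}$ strengthened to equality throughout.
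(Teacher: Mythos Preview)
Your proof is correct and follows exactly the approach the paper sketches: pull back a $\lambda_\Phi$-Schnorr test through $\Phi$ and verify that the preimage is a Lebesgue Schnorr test, using $\lambda(\U_i)=\lambda(\Phi^{-1}(\V_i))=\lambda_\Phi(\V_i)=2^{-i}$. The paper gives only this outline and defers the details to \cite{BiePor12}, so your write-up is in fact more complete than what appears here.
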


\noindent The basic idea behind these two proofs is straightforward.  Given a Martin-L\"of test $(\U_i)_{i\in\omega}$ (resp.\ Schnorr test) with respect to the measure $\lambda_\Phi$ induced by $\Phi$, one shows that the pre-image of $(\U_i)_{i\in\omega}$ under $\Phi$ is a Martin-L\"of test (resp.\ Schnorr test) with respect to the Lebesgue measure.  Thus, if $\Phi(X)$ is captured by the test $(\U_i)_{i\in\omega}$, $X$ is covered by the pre-image of this test under $\Phi$.  For more details, see, for instance, \cite[Theorem 3.2 and Theorem 4.1]{BiePor12}.

We will also make use of a relative version of the preservation of Martin-L\"of randomness:  For any $A\in\cs$, if $\Phi$ is a $tt$-functional and $X\in\MLR^A$, then $\Phi(X)\in\MLR_{\lambda_\Phi}^A$.  

A partial converse to the preservation of Martin-L\"of randomness due to Shen (unpublished) is the following.

\begin{theorem}\label{thm:ex-nihilo}
For $\Phi$ an almost total functional and $Y\in\cs$, if $Y\in\MLR_{\lambda_\Phi}$, then there is some $X\in\MLR$ such that $\Phi(X)=Y$.
\end{theorem}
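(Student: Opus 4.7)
\emph{Proof plan.} I argue by contrapositive. Assume no $X \in \MLR$ satisfies $\Phi(X) = Y$; the goal is to construct a $\lambda_\Phi$-Martin-L\"of test that captures $Y$, contradicting $Y \in \MLR_{\lambda_\Phi}$. Let $(\V_i)_{i\in\omega}$ be a universal $\lambda$-Martin-L\"of test. Under the assumption, every $X \in \dom(\Phi)$ with $\Phi(X) = Y$ lies in $\bigcap_i \V_i$, and since $\cs\setminus\dom(\Phi)$ is $\lambda$-null we have $\lambda(\Phi^{-1}(\{Y\}) \setminus \V_i) = 0$ for every $i$.

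For each $i$, define
\[
\mathcal{W}_i \;=\; \bigcup\bigl\{\,\llb\sigma\rrb \,:\, 2\lambda(\Phi^{-1}(\llb\sigma\rrb) \cap \V_i) > \lambda_\Phi(\llb\sigma\rrb)\,\bigr\}.
\]
This is effectively open: the left-hand side is lower semi-computable in $\sigma$ as the $\lambda$-measure of an effectively open set, while the right-hand side is computable because $\lambda_\Phi$ is. For the measure bound, extract a prefix-free antichain $\{\sigma_k\}$ with $\mathcal{W}_i = \bigsqcup_k \llb\sigma_k\rrb$; then the preimages $\Phi^{-1}(\llb\sigma_k\rrb)$ are pairwise disjoint, so
\[
\lambda_\Phi(\mathcal{W}_i) \;=\; \sum_k \lambda_\Phi(\llb\sigma_k\rrb) \;<\; 2\sum_k \lambda\bigl(\Phi^{-1}(\llb\sigma_k\rrb) \cap \V_i\bigr) \;\leq\; 2\lambda(\V_i) \;\leq\; 2^{1-i},
\]
so after a shift of index $(\mathcal{W}_{i+1})_{i\in\omega}$ is a bona fide $\lambda_\Phi$-Martin-L\"of test.

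The crux is to show $Y \in \mathcal{W}_i$ for every $i$, i.e., to exhibit $n$ with $2\lambda(\Phi^{-1}(\llb Y\uh n\rrb) \cap \V_i) > \lambda_\Phi(\llb Y\uh n\rrb)$. If $Y \in \atom_{\lambda_\Phi}$ this is immediate: the decreasing sets $\Phi^{-1}(\llb Y\uh n\rrb)$ and $\Phi^{-1}(\llb Y\uh n\rrb)\cap\V_i$ satisfy, by downward continuity of $\lambda$, $\lambda(\Phi^{-1}(\llb Y\uh n\rrb) \cap \V_i) \to \lambda(\Phi^{-1}(\{Y\}) \cap \V_i) = \lambda_\Phi(\{Y\}) > 0$ and $\lambda_\Phi(\llb Y\uh n\rrb) \to \lambda_\Phi(\{Y\})$, so the ratio tends to~$1$. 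The main obstacle is the atomless case, where both quantities tend to $0$ and the ratio is \emph{a priori} of indeterminate form. Here I would invoke an effective Lebesgue differentiation theorem applied to the lower semi-computable measure $\nu_i(\cdot) := \lambda(\Phi^{-1}(\cdot) \cap \V_i)$: at the $\lambda_\Phi$-Martin-L\"of random point $Y$, the ratio $\nu_i(\llb Y\uh n\rrb)/\lambda_\Phi(\llb Y\uh n\rrb)$ converges to the Radon--Nikodym density $d\nu_i/d\lambda_\Phi(Y)$, and via disintegration of $\lambda$ along $\Phi$ this density equals the conditional mass $\nu_Y(\V_i)$ of the fiber measure supported on $\Phi^{-1}(\{Y\})$, which is $1$ since $\Phi^{-1}(\{Y\}) \subseteq \V_i$ modulo a $\lambda$-null set. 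Thus the ratio eventually exceeds $\tfrac{1}{2}$, placing $Y$ in $\mathcal{W}_i$ and delivering the contradiction.
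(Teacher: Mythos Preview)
The paper does not prove this theorem; it only refers the reader to \cite[Theorem 3.5]{BiePor12}. So there is no in-paper argument to compare against directly. That said, your proposal has a genuine gap in the atomless case, and it is considerably more elaborate than the standard (Shen) argument.

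In the atomless case you assert that $\nu_i(\llb Y\uh n\rrb)/\lambda_\Phi(\llb Y\uh n\rrb)\to\nu_Y(\V_i)=1$, appealing to an effective Lebesgue differentiation theorem together with disintegration of $\lambda$ along $\Phi$. But the family of fiber measures $(\nu_y)_y$ is defined only for $\lambda_\Phi$-almost every $y$, and only up to modification on a $\lambda_\Phi$-null set of $y$'s; there is no canonical $\nu_Y$ at your particular $Y$. You would need an \emph{effective} disintegration theorem guaranteeing that at every $\lambda_\Phi$-Martin-L\"of random point the martingale limit coincides with the value of a fiber measure genuinely supported on $\Phi^{-1}(\{Y\})$, and you neither prove nor cite such a result. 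Your justification that $\nu_Y(\V_i)=1$ ``since $\Phi^{-1}(\{Y\})\subseteq\V_i$ modulo a $\lambda$-null set'' is also vacuous here: in the atomless case $\Phi^{-1}(\{Y\})$ is itself $\lambda$-null, and $\nu_Y$ is not the restriction of $\lambda$ to the fiber, so this inclusion tells you nothing about $\nu_Y$.

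The standard proof avoids all of this. With $P_i=\cs\setminus\V_i$ a $\Pi^0_1$ class, set $\mathcal{U}_i=\cs\setminus\Phi(P_i)$; for a $tt$-functional this is effectively open, since the image of an effectively compact class under a total computable map is effectively compact. If $Y\in\mathcal{U}_i$ then every $\Phi$-preimage of $Y$ lies in $\V_i$, so $\Phi^{-1}(\mathcal{U}_i)\subseteq\V_i$ and $\lambda_\Phi(\mathcal{U}_i)\leq 2^{-i}$. Thus $(\mathcal{U}_i)_i$ is a $\lambda_\Phi$-Martin-L\"of test, and $Y\in\bigcap_i\mathcal{U}_i$ exactly when $Y$ has no Martin-L\"of random $\Phi$-preimage. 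For merely almost-total $\Phi$ one instead projects the $\Pi^0_1$ class $\{(X,Y):X\in P_i\text{ and }\Phi^X\text{ is compatible with }Y\}$ onto the second coordinate and handles the $\lambda$-null set $\cs\setminus\dom(\Phi)$ separately. No density or differentiation argument is needed.
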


A proof of this result can be found in \cite[Theorem 3.5]{BiePor12}.  A relativized version of Theorem \ref{thm:ex-nihilo} also holds, and combining this result with the relativized version of the preservation of Martin-L\"of randomness, one can prove the following:

\begin{theorem}\label{thm:rel-lk}
Given $X, A\in\cs$ and a $tt$-functional $\Phi$, if $\Phi(X)$ is not computable and $\Phi^{-1}(\Phi(X))=\{X\}$, then $X\in\MLR^A$ if and only if $\Phi(X)\in\MLR_{\lambda_\Phi}^A$.
\end{theorem}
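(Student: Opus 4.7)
The plan is to prove the two implications separately, each using one of the two preservation-style theorems the excerpt has already placed on the table. The forward direction does not use either the uniqueness hypothesis or the noncomputability hypothesis: it is just the relativized preservation of Martin-L\"of randomness noted in the paragraph immediately before the statement. Since $\Phi$ is a $tt$-functional, any $X\in\MLR^A$ automatically gives $\Phi(X)\in\MLR^A_{\lambda_\Phi}$.

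For the reverse direction, I would invoke the relativized version of Theorem~\ref{thm:ex-nihilo}, whose existence the excerpt explicitly asserts. Because $\Phi$ is total (hence almost total) and $\Phi(X)\in\MLR^A_{\lambda_\Phi}$, this theorem produces some $X'\in\MLR^A$ with $\Phi(X')=\Phi(X)$. The uniqueness hypothesis $\Phi^{-1}(\Phi(X))=\{X\}$ then forces $X'=X$, delivering $X\in\MLR^A$. So the entire substantive content is packaged in the two preceding preservation theorems plus one line of use of uniqueness.

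The noncomputability hypothesis on $\Phi(X)$ is there to rule out a degenerate case rather than to drive the argument. If $\Phi(X)$ were computable, a standard continuity and compactness argument for $\Phi$ together with the uniqueness $\Phi^{-1}(\Phi(X))=\{X\}$ would force $X$ to be computable as well, in which case $\lambda_\Phi(\{\Phi(X)\})=\lambda(\{X\})=0$ and both sides of the biconditional would fail trivially by the relativization of Proposition~\ref{prop:atoms}(iii). Thus the noncomputability clause merely pins down the nontrivial regime. I do not anticipate a real obstacle: the crucial ingredients---the relativized preservation theorem and the relativized version of Shen's ex~nihilo theorem---are both already in hand, and the rest is the one-line observation that uniqueness of the fiber turns ``some preimage is $A$-random'' into ``$X$ itself is $A$-random.''
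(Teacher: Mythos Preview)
Your proposal is correct and follows exactly the approach the paper indicates: the forward direction is the relativized preservation of Martin-L\"of randomness, and the reverse direction is the relativized version of Theorem~\ref{thm:ex-nihilo} together with the uniqueness hypothesis $\Phi^{-1}(\Phi(X))=\{X\}$ to identify the produced random preimage with $X$ itself. Your side remark on the noncomputability hypothesis is also accurate: it excludes only a degenerate case in which both sides of the biconditional are false.
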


\section{Tally Functionals}\label{sec:tally}

The main tool used in the construction of various trivial measures are what we refer to here as \emph{tally functionals}.  Given a $\Delta^0_1$ formula $\Theta(X,y,z)$ with free first-order variables $y$ and $z$ and free second-order variable $X$, we define an auxiliary function $\theta(A,n):\cs\times\omega\rightarrow\omega$ by
\[
\theta(A,n)=
\left\{
	\begin{array}{ll}
		 \;\text{the least $s$ such that}\; \Theta(A,n,s) & \mbox{if   $s$ exists}  \\
		+\infty & \mbox{otherwise} 
	\end{array}.
\right.
\]
Then the tally functional $\Phi_\Theta$ determined by the formula $\Theta$ is defined to be
\[
\Phi_\Theta(A)=1^{\theta(A,0)}\;0\;1^{\theta(A,1)}\;0\;1^{\theta(A,2)}\;0\dotsc,
\]
where 
\[\Phi_\Theta(A)=1^{\theta(A,0)}\;0\;1^{\theta(A,1)}\;0\dotsc\;1^{\theta(A,k)}\;0\;1^\omega\]
if $\theta(A,k+1)=+\infty$ (and $k+1$ is the least $n$ such that $\theta(A,n)=+\infty$).

A useful example that we will use repeatedly in Section \ref{sec:fdl} is a tally functional defined in terms of the approximation of a non-computable $\Delta^0_2$ sequence $A$.  Let $(A_s)_{s\in\omega}$ be a $\Delta^0_2$ approximation of $A$.  Without loss of generality, we can assume that $A_s\neq A_{s+1}$ for every $s$.  If we define the formula $\Theta(A,n,s)$ so that
\[
\Theta(A,n,s)\;\text{holds if and only if}\;A\uh n=A_s\uh n,
\] 
then $\theta(A,n)$ is the first stage $s$ such that $A\uh n=A_s\uh n$.  Let $\Phi_A$ be the resulting tally functional.

\begin{lemma}\label{lem:tally-A}
Suppose that $A\in\Delta^0_2$ is non-computable, and let $\Phi_A$ be the tally functional defined above.
\begin{itemize}
\item[($i$)] If $X=A_s$ for some $s$, then there is some $m$ such that for every $n\geq m$, $\theta(X,n)=\theta(X,m)$, i.e., the function $f(x)=\theta(X,x)$ is eventually constant. 
\item[($ii$)] If $X\neq A$ and $X\neq A_s$ for each $s$, then $\theta(X,n)=+\infty$ for some $n$. 
\item[($iii$)] The function $g(x)=\theta(A,n)$ is not computable, nor is it dominated by any computable function.  
\end{itemize}
\end{lemma}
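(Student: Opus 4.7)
The plan is to unpack the definition $\theta(X,n)=\min\{s : X\uh n = A_s\uh n\}$ and use two structural observations throughout: (a) as $n$ grows, the condition $A_s\uh n = X\uh n$ becomes strictly more restrictive, so $n\mapsto \theta(X,n)$ is non-decreasing in $n$ for every $X$; and (b) $(A_s)_{s\in\omega}$ converges to $A$ pointwise, so for each bit position $k$ there is a stage past which $A_s(k)=A(k)$ permanently.

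Part (i) is immediate from (a): since $X=A_{s_0}$ forces $X\uh n = A_{s_0}\uh n$ for every $n$, we have $\theta(X,n)\leq s_0$ for all $n$, and a bounded non-decreasing $\omega$-valued function is eventually constant.

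For part (ii), I would argue by contradiction, assuming $\theta(X,n)<\infty$ for every $n$. By (a), the sequence $(\theta(X,n))_{n\in\omega}$ is non-decreasing and so either stabilizes at some $s^*$ or tends to infinity. In the first case, $X\uh n = A_{s^*}\uh n$ for all sufficiently large $n$, which forces $X=A_{s^*}$, contradicting the hypothesis. In the second case, fix any bit position $k$ and choose $n>k$ large enough that $\theta(X,n)$ exceeds the settling stage of bit $k$ of $(A_s)$; then by (b), $X(k) = A_{\theta(X,n)}(k) = A(k)$, so $X=A$, again contradicting the hypothesis.

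For part (iii), non-computability of $g$ is immediate: if $g$ were computable, then $A\uh n = A_{g(n)}\uh n$ would yield a uniform computation of $A$, contradicting non-computability of $A$. For the non-domination claim, suppose for contradiction that $g(n)\leq h(n)$ for some computable, non-decreasing $h$. The plan is to adapt the standard modulus-of-convergence argument for $\Delta^0_2$ approximations: each $A\uh n$ lies in the finite, uniformly computable set $C_n := \{A_s\uh n : s\leq h(n)\}$, and the remaining task is to use the monotonicity of $g$ together with the WLOG hypothesis $A_s\neq A_{s+1}$ to convert the bound $h$ into a computable modulus of convergence for $A$, thereby forcing $A$ to be computable. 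Extracting such a modulus from the weaker first-match data (rather than the true settling time) is the principal obstacle, and is precisely where the hypothesis $A_s\neq A_{s+1}$ must be exploited to prevent the approximation from ``bouncing'' on a fixed prefix after the first match.
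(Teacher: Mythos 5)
Parts (i) and (ii), and the non-computability half of (iii), are correct, and your route is if anything cleaner than the paper's: you run everything through the observation that $n\mapsto\theta(X,n)$ is non-decreasing (bounded, hence eventually constant, for (i); the dichotomy ``stabilizes or tends to infinity'' plus pointwise convergence of $(A_s)$ to $A$ for (ii)), whereas the paper proves (i) by locating the disagreement with $A_{s-1}$ and (ii) by a convergence-of-rationals argument producing a prefix length on which $X$ agrees with no $A_s$. These are minor packaging differences.

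The genuine gap is the non-domination half of (iii), and you have flagged it yourself without closing it: you never convert the hypothesis $\theta(A,n)\le h(n)$ into a computation of $A$. The paper at this point simply cites the standard proof that every non-computable $\Delta^0_2$ sequence is of hyperimmune degree (Nies, Theorem 1.5.12). That standard argument is built around a computation-function-type quantity $c(n)=\mu s\ge n\,[A_s\uh n = A\uh n]$: if a computable $h$ dominates $c$, one computes $A\uh n$ by searching for an $m\ge n$ such that $A_s\uh n$ takes a single value for all $s\in[m,h(m)]$; existence of such an $m$ uses settling of the approximation, and correctness uses that \emph{every} window $[m,h(m)]$ contains a stage agreeing with $A$ on the first $n$ bits. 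The trick needs the windows to slide off to infinity, which is exactly what the clause $s\ge n$ provides. The paper's $\theta(A,n)$ is the first match over \emph{all} stages, so the corresponding windows are $[0,h(n)]$; they always contain the early, possibly permanently wrong stages, the singleton test never certifies anything, and there is no routine transfer of the cited argument. Your proposed lever --- that the assumption $A_s\ne A_{s+1}$ prevents the approximation from ``bouncing'' on a fixed prefix after the first match --- does not do this work: consecutive approximations may differ only at positions $\ge n$, so the approximation can leave and re-enter a given length-$n$ prefix arbitrarily often, and the first match can occur far earlier than the settling time. So as written, the key claim of (iii) (which is genuinely used later, e.g.\ in Case 3 of the proof of Theorem \ref{thm:mlr-A}, where a function dominating $\theta(A,\cdot)$ is declared non-computable) remains unproven in your write-up; you need either a direct argument for the first-match function or to reformulate $\theta$ with the clause $s\ge n$ so that the standard hyperimmunity argument applies verbatim.
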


\begin{proof} ($i$)  Let $s$ be least such that $X=A_s$.  If $s=0$, then since for all $n$, $X\uh n=A_0\uh n$, it follows that $\theta(X,n)=0$ for all $n$.  Now suppose that $s>0$.  Then there is some $m$ such that $X\uh m \neq A_{s-1}\uh m$.  It thus follows that $X \uh n=A_s\uh n$ for all $n\geq m$, which implies that $\theta(X,n)=s$ for all $n\geq m$.

\medskip

\noindent($ii$) First we establish the following claim:  There is some $k$ such that for every $s$, $X\uh k\neq A_s\uh k$.  Suppose not, so that for every $k$, there is some $s$ such that $X\uh k=A_s\uh k$.  But this implies that for every $k$, there exist infinitely many $s$ such that $X\uh k=A_s\uh k$, since (a) $X\neq A_s$ for every $s$ and (b) $X\uh k =A_s\uh k$ implies that $X\uh j =A_s\uh j$  for every $j<k$.  From this it follows that the $A_s$'s (viewed as rational numbers) converge to $X$ (viewed as a real number).  But the $A_s$'s also converge to $A$, and thus it follows that $X=A$, contradicting our hypothesis.  Now let $k$ be least such $X\uh k\neq A_s\uh k$ for every $s$.  Then it follows that $\theta(X,k)=+\infty$.

\medskip

\noindent($iii$)  Suppose that $g(n)=\theta(A,n)$ is computable.  Then since 
\[
A_{g(n)}\uh n=A_{\theta(A,n)}\uh n=A\uh n,
\]
this implies that $A$ is computable, contradicting our assumption that $A$ is non-computable.  The proof that $g$ is not dominated by any computable function is just given by the standard proof used to show that every non-computable $\Delta^0_2$ sequence is hyperimmune.  See, for example, \cite[Theorem 1.5.12]{Nie09}.

\end{proof}

\begin{theorem}\label{thm:delta2}
\begin{itemize}
\item[($i$)] The measure $\mu$ induced by $\Phi_A$ is trivial.
%\item[(ii)] $\Phi_\Theta(\{X: X\neq A \;\&\; (\forall s)[X\neq A_s]\})=\atom_\mu\subseteq\{\sigma1^\omega:\sigma\in\str\}$.
\item[($ii$)] If $A\in\MLR$, then $\MLR_\mu\setminus\atom_\mu=\{\Phi_A(A)\}$.
\end{itemize}
\end{theorem}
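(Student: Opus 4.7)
The plan for part (i) is to show that the image $R := \Phi_A(\cs \setminus \{A\})$ is countable, after which triviality of $\mu$ is forced. Let $X \neq A$. If $X$ is the characteristic function of some $A_s$, then by Lemma \ref{lem:tally-A}(i) the function $\theta(X,\cdot)$ is eventually constant, so $\Phi_A(X)$ is eventually periodic, hence computable. Otherwise, by Lemma \ref{lem:tally-A}(ii), $\theta(X,n) = +\infty$ for some $n$, so $\Phi_A(X)$ ends in $1^\omega$ and is again computable. Thus $R$ consists of computable sequences and is therefore countable. Since $\lambda(\{A\}) = 0$, we have $\mu(R) = \lambda(\Phi_A^{-1}(R)) = 1$; expanding the convergent countable sum $\mu(R) = \sum_{Y \in R}\mu(\{Y\})$ gives $\mu(\atom_\mu \cap R) = 1$, so $\mu$ is trivial.

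For part (ii), I plan to treat the two inclusions separately. To place $\Phi_A(A)$ in $\MLR_\mu \setminus \atom_\mu$, the preservation of Martin-L\"of randomness (Theorem \ref{thm:preservation-mlr}) applied to $A \in \MLR$ immediately yields $\Phi_A(A) \in \MLR_\mu$. To rule out $\Phi_A(A) \in \atom_\mu$, I would appeal to the fact that every atom is computable (Proposition \ref{prop:atoms}(i)); but if $\Phi_A(A)$ were computable, one could recover $\theta(A,n)$ by counting the blocks of $1$'s in its bit pattern, contradicting the non-computability of $n \mapsto \theta(A,n)$ established in Lemma \ref{lem:tally-A}(iii).

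For the reverse inclusion, given $Y \in \MLR_\mu \setminus \atom_\mu$, I would invoke Theorem \ref{thm:ex-nihilo} to produce some $X \in \MLR$ with $\Phi_A(X) = Y$. Since $X$ is Martin-L\"of random it is non-computable, so $X \neq A_s$ for every $s$ (each $A_s$ being the characteristic function of a finite set, hence computable). If additionally $X \neq A$, then Lemma \ref{lem:tally-A}(ii) forces $Y$ to end in $1^\omega$ and therefore to be computable; but then $Y \in \MLR_\mu$ combined with Proposition \ref{prop:atoms}(iii) forces $Y \in \atom_\mu$, contrary to assumption. Hence $X = A$ and $Y = \Phi_A(A)$.

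The main obstacle is this final contradiction in the reverse direction: everything hinges on the rigid dichotomy provided by Lemma \ref{lem:tally-A}, which says that the only preimage of $Y$ avoiding both the computable orbit $\{A_s : s \in \omega\}$ and a $1^\omega$-tail is $A$ itself. Once this dichotomy is in hand the argument is short; without it, one would have to worry about further Martin-L\"of random preimages of $Y$ obstructing uniqueness.
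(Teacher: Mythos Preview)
Your proposal is correct and follows essentially the same route as the paper. The only cosmetic difference is in part~($i$): the paper singles out the countable set $\mathcal{S}=\{\sigma 1^\omega:\sigma\in\str\}$ and observes that every $X\in\MLR\setminus\{A\}$ (being non-computable, hence never equal to any $A_s$) lands in $\mathcal{S}$, whereas you take the full image $\Phi_A(\cs\setminus\{A\})$ and note it is countable because it consists of computable sequences; both arguments yield a countable set of full $\mu$-measure and are interchangeable.
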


\begin{proof}
($i$) Given input $X$, there are three cases to consider to determine the output $\Phi_A(X)$:\\

\noindent\emph{Case 1:} $X=A_s$ for some $s$.  In this case, 
	by Lemma \ref{lem:tally-A}($i$), the function $f(x)=\theta(X,x)$ is eventually constant, and thus $\Phi_A(X)=\sigma1^k01^k01^k0\dotsc$ for some $\sigma\in\str$, which is clearly computable.\\

\noindent\emph{Case 2:}  $X\neq A$ and $X\neq A_s$ for every 	$s$.  Then by Lemma \ref{lem:tally-A}($ii$), there is some 	$n$ such that $\theta(X,n)=+\infty$, and hence $\Phi_A(X)=\sigma1^\omega$ for some $\sigma\in\str$, which is computable.\\

\noindent\emph{Case 3:}  $X=A$.  By Lemma \ref{lem:tally-A}($iii$), since the function $g(n)=\theta(A,n)$ is not computable, it follows that 
	\[
	\Phi_A(X)=\Phi_A(A)=1^{\theta(A,0)}\;0\;1^{\theta(A,1)}\;0\;1^{\theta(A,2)}\;0\dotsc
	\]
	is not a computable sequence.

For every $B\in\MLR$ such that $B\neq A$, we must be in Case 2, as $B\neq A_s$ for every $s$.  Thus $\Phi_\Theta(B)=\sigma 1^\omega$ for some $\sigma\in\str$. Setting
\[
\mathcal{S}=\{Y:(\exists \sigma\in\str)[Y=\sigma1^\omega]\},
\]
we have
\[
\MLR\setminus\{A\}\subseteq\Phi^{-1}_\Theta(\mathcal{S}),
\]
from which it follows that 
\[
1=\lambda(\MLR\setminus\{A\})\leq \lambda(\Phi^{-1}_\Theta(\mathcal{S})).
\]
Since $\mu=\lambda_{\Phi_\Theta}$ assigns measure one to the countable collection $\mathcal{S}$, it follows that $\mu$ is trivial.

%\bigskip
%\noindent
%($ii$)  From the proof of part ($i$), it follows that
%\[
%\Phi_\Theta(\{X: X\neq A \;\&\; (\forall s)[X\neq A_s]\})\subseteq\atom_\mu.
%\]
%For the other direction, observe that the collection of sequences satisfying Case 1 has Lebesgue measure 0, and thus
%\[
%\lambda_{\Phi_\Theta}(\{Y:(\exists k)(\exists\sigma\in\str)[Y=\sigma(1^k0)^\omega]\})=0.
%\]
%Thus, the image under $\Phi_\Theta$ of every sequence satisfying Case 1 is a non-$\mu$-atom.  Moreover, the only sequence that falls under Case 3 is $A$, so by Lemma \ref{lem-tally-A} (iii), since $\Phi_\Theta(A)$ is not computable, it follows from Proposition \ref{prop-atoms-are-computable} that $\Phi_\Theta(A)$ is not a $\mu$-atom.  It follows that each $\mu$-atom is always the image of some sequences falling under Case 2, and hence
%\[
%\Phi_\Theta(\{X: X\neq A \;\&\; (\forall s)[X\neq A_s]\})=\atom_\mu.
%\]
%Lastly
%\[
%\Phi_\Theta(\{X: X\neq A \;\&\; (\forall s)[X\neq A_s]\})\subseteq\{\sigma1^\omega:\sigma\in\str\}, 
%\]
%yields
%\[
%\atom_\mu\subseteq\{\sigma1^\omega:\sigma\in\str\}.
%\]
\bigskip
\noindent($ii$)  First, $\Phi_A(A)\in\MLR_\mu$ by the preservation of Martin-L\"of randomness.  In addition, $\Phi_A(A)\notin\atom_\mu$, for otherwise $\Phi_A(A)$ would be computable by Proposition \ref{prop:atoms}($i$), and hence the function $g(n)=\theta(A,n)$ would be computable, contradicting Lemma \ref{lem:tally-A}($iii$).

Next, if $X\in\MLR_\mu\setminus\atom_\mu$, then by Theorem \ref{thm:ex-nihilo}, $X\in\MLR_\mu$ implies that $X=\Phi_A(Y)$ for some $Y\in\MLR$.  But since $X\notin\atom_\mu$, $X$ is not computable.  In particular, $X$ does not have the form $\sigma 1^\omega$ for any $\sigma\in\str$. It follows that $Y$ cannot fall under Case 2, and since no $Y\in\MLR$ falls under Case 1, it must be that $Y=A$.  Thus, $X=\Phi_A(A)$.

\end{proof}

We will revisit this result in Section \ref{sec:fdl}, when we consider the $LR$-degree structures associated to different trivial measures. For other applications of tally functionals, see \cite{BiePor12}.

\section{Separating Randomness Notions via Trivial Measures}\label{sec:separating}

In this section, we prove three results of the following form:  Let $\R^1$ and $\R^2$ be two notions of randomness such that $\R^1_\mu\subseteq\R^2_\mu$ for every $\mu\in\M_c$.  Then there is a measure $\mu\in\M_c$ such that ($i$) $\R^1_\mu=\atom_\mu$ and ($ii$) $\R^2_\mu\setminus\R^1_\mu\neq\emptyset$.  Moreover, we will be able to conclude that $\mu$ is trivial, since by condition ($i$), $\mu(\atom_\mu)=1$.

\subsection{Separating Martin-L\"of randomness and 2-randomness}  In order to separate $\MLR$ and $\twoMLR$ via a trivial measure, we prove a more general result by modifying the construction of a trivial measure in the previous section, and then we apply van Lambalgen's Theorem.

\begin{theorem}\label{thm:mlr-A}
Let $A\in\MLR\cap\Delta^0_2$.  Then there is a trivial measure $\mu\in\M_c$ such that
\begin{itemize}
\item[($i$)] $\MLR_\mu^A=\atom_\mu$, and
\item[($ii$)] $\MLR_\mu\setminus\MLR_\mu^A\neq\emptyset$.
\end{itemize}
\end{theorem}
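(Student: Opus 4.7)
The plan is to reuse the tally functional $\Phi_A$ and its associated measure $\mu := \lambda_{\Phi_A}$ from Theorem \ref{thm:delta2}. Since $A \in \MLR \cap \Delta^0_2$, that theorem already tells us $\mu$ is trivial and that $\MLR_\mu \setminus \atom_\mu = \{\Phi_A(A)\}$. So (ii) is already in hand, provided we can show that $\Phi_A(A) \notin \MLR_\mu^A$; and (i) will reduce to the same observation.

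The key observation is that $\Phi_A(A) \leq_T A$. Indeed, with oracle $A$ one can, for each $n$, search for the least $s$ such that $A \uh n = A_s \uh n$; this is exactly $\theta(A,n)$, so $A$ computes the sequence of block lengths that defines $\Phi_A(A)$. On the other hand, $\Phi_A(A) \notin \atom_\mu$ (by Theorem \ref{thm:delta2}(ii)), so $\mu(\{\Phi_A(A)\}) = 0$. Applying the (trivial) relativization of Proposition \ref{prop:atoms}(iii)---if $X$ is $A$-computable and $\mu(\{X\}) = 0$, then $X \notin \MLR_\mu^A$, because the complement of $\{X\}$ can be covered $A$-effectively by arbitrarily small $\mu$-neighborhoods using the approximation of $\mu$ together with the $A$-computable string $X$---we conclude $\Phi_A(A) \notin \MLR_\mu^A$. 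Combined with $\Phi_A(A) \in \MLR_\mu$, this yields (ii).

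For (i), the inclusion $\atom_\mu \subseteq \MLR_\mu^A$ is immediate, since any singleton of positive $\mu$-measure evades every $\mu$-test (relativized or not): no $\mu$-test relative to $A$ can cover an atom without exceeding its allotted $\mu$-measure. Conversely, suppose $X \in \MLR_\mu^A$. Then $X \in \MLR_\mu$, so by Theorem \ref{thm:delta2}(ii), either $X \in \atom_\mu$ or $X = \Phi_A(A)$. The second alternative was just ruled out, so $X \in \atom_\mu$.

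There is no real obstacle here beyond checking that Proposition \ref{prop:atoms}(iii) relativizes---which it does, word for word, since the argument only uses a computable approximation to $\mu$ together with the sequence $X$ itself. The construction of $\mu$, its triviality, and the identification of its non-atomic Martin-L\"of randoms are all delivered by Theorem \ref{thm:delta2}; the only genuinely new input is the direct computation $\Phi_A(A) \leq_T A$ via the stage function $\theta(A, \cdot)$.
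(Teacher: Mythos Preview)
Your argument is correct, and it is genuinely simpler than the paper's own proof. You reuse the measure $\mu=\lambda_{\Phi_A}$ from Theorem~\ref{thm:delta2} directly, observing that $\Phi_A(A)\leq_T A$ (immediate, since $\Phi_A$ is a $tt$-functional) and $\mu(\{\Phi_A(A)\})=0$, so the relativized Proposition~\ref{prop:atoms}($iii$) kills $\Phi_A(A)$ from $\MLR_\mu^A$; together with $\MLR_\mu\setminus\atom_\mu=\{\Phi_A(A)\}$ this gives both ($i$) and ($ii$) at once.

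The paper instead builds a \emph{new} functional $\Psi$ on inputs $X\oplus Y$, replacing the blocks of $1$s in $\Phi_A(X)$ by blocks of the successive bits of $Y$, and takes $\mu=\lambda_\Psi$. It then uses the relativized no-randomness-ex-nihilo theorem (Theorem~\ref{thm:ex-nihilo}) together with van Lambalgen's theorem: any $Z\in\MLR_\mu^A$ would be $\Psi(X\oplus Y)$ for some $A$-random $X\oplus Y$, forcing $X\neq A$ and $X\neq A_s$, hence $Z$ computable; and any $B\in\MLR^A$ yields $\Psi(A\oplus B)\in\MLR_\mu\setminus\MLR_\mu^A$. What this buys is a measure whose set $\MLR_\mu\setminus\atom_\mu$ is large (one non-atom for each $B\in\MLR^A$), whereas in your construction it is the singleton $\{\Phi_A(A)\}$. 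For the theorem as stated that extra richness is irrelevant, so your route is the more economical one; the paper's construction is perhaps motivated by wanting to exhibit the $\Psi$-technique rather than by necessity.
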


\begin{proof}
We define a new functional $\Psi$ that on input $X\oplus Y$ behaves much like the tally functional $\Phi_A$ defined in Section \ref{sec:tally}. However, instead having our functional output a sequence of blocks of 1s separated by individual 0s, $\Psi$ will output blocks consisting the bits of $Y$.  Specifically, suppose that 
\[
\Phi_A(X)=1^{t_0}01^{t_1}01^{t_2}0\dotsc1^{t_i}0\dotsc.
\]
Then we have 
\[
\Psi(X\oplus Y)=y_0^{t_0}\;y_1^{t_1}\;y_2^{t_2} \dotsc y_i^{t_i}\dotsc
\]
where $y_i=Y(i)$ for every $i$.  Note that $\Psi$ is total, since $\Phi_A$ is total.  Let $\mu$ be the induced measure $\lambda_\Psi$.  Given input $X\oplus Y$, there are three relevant cases to consider, corresponding to the three cases we considered in the proof of Theorem \ref{thm:delta2}.\\

\noindent\emph{Case 1:} $X=A_s$ for some $s$.  Then for any $Y\in\cs$, there is some $k$ such that
\[
\Psi(A_s\oplus Y)=y_0^{t_0}\;y_1^{t_1}\;y_2^{t_2} \dotsc y_i^k\; y_{i+1}^k\; y_{i+2}^k\dotsc .
\]
That is, the lengths of the blocks of the $y_i$'s eventually stabilize (just as the function $\theta(A_s,n)$ in Case 1 of the proof of Theorem \ref{thm:delta2} eventually stabilizes).\\

\noindent\emph{Case 2:}  $X\neq A$ and $X\neq A_s$ for every 	$s$.  Then there is some $\sigma\in\str$ and $i\in\omega$ such that after some stage, $\Psi$ will output the same bit $y_i$ forever, i.e.,
\[
\Psi(X\oplus Y)=\sigma(y_i)^\omega.
\]

\noindent\emph{Case 3:}  $X=A$.  Then using the same function $g(n)=\theta(A,n)$ from Lemma \ref{lem:tally-A}($iii$),  we have	
\[
	\Psi(X\oplus Y)=\Psi(A\oplus Y)=y_1^{\theta(A,0)}\;y_2^{\theta(A,1)}\;y_3^{\theta(A,2)}\dotsc.
	\]
Note further that if $Y$ has only finitely many 0s or finitely many 1s, then $\Psi(A\oplus Y)$ will eventually stabilize.  In the case that $Y$ has infinitely 0s and 1s, there is some function $f:\omega\rightarrow\omega$ such that
\[
\Psi(A\oplus Y)=b_0^{f(0)}\;b_1^{f(1)}\;b_2^{f(2)} \dotsc y_i^{f(i)}\dotsc
\]
where $b_i\neq b_{i+1}$ for each $i\in\omega$.  It is not hard to see that $\theta(A,n)\leq f(n)$ for every $n\in\omega$, from which it follows that $f$ is not computable by Lemma \ref{lem:tally-A}($iii$), and thus $\Psi(A\oplus Y)$ is not computable.

Now we verify ($i$) and ($ii$).  For ($i$), it is clear that $\atom_\mu\subseteq\MLR_\mu^A$. Given $Z\in\MLR_\mu^A$, by Theorem \ref{thm:ex-nihilo} relativized to $A$, there must be some $A$-random sequence $X\oplus Y$ such that $\Psi(X\oplus Y)=Z$.  If $Z$ is not computable, it must be the case that either $X=A$ or $X=A_s$ for some $s\in\omega$, for otherwise we would be in Case 2, so that $\Psi(X\oplus Y)=Z$ would be computable.  But $A\oplus Y$ is not Martin-L\"of random relative to $A$, nor is $A_s\oplus Y$ for any $s\in\omega$.  Thus, $Z$ must be computable, and so by Proposition \ref{prop:atoms}($iii$), $Z\in\atom_\mu$.

For ($ii$), let $B\in\MLR^A$.  Then $A\oplus B\in\MLR$ by van Lambalgen's Theorem (Theorem \ref{thm:vlt}).  It follows from the preservation of Martin-L\"of randomness that $\Psi(A\oplus B)\in\MLR_\mu$.  Since $B$ has infinitely many 0s and 1s, by the discussion in Case 3, $\Psi(A\oplus B)$ is not computable.  By Proposition \ref{prop:atoms}($i$), $\Psi(A\oplus B)\notin\atom_\mu$, and so by part ($i$), $\Psi(A\oplus B)\notin\MLR_\mu^A$. 
\end{proof}

We now have the following corollary:
\begin{corollary}\label{cor:2mlr}
There is a trivial measure $\mu$ such that
\begin{itemize}
\item[($i$)] $\twoMLR_\mu=\atom_\mu$, and
\item[($ii$)] $\MLR_\mu\setminus\twoMLR_\mu\neq\emptyset$.
\end{itemize}
\end{corollary}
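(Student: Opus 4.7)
The plan is to apply Theorem \ref{thm:mlr-A} with a specific choice of $A \equiv_T \emptyset'$. As noted in Section \ref{sec:background}, Chaitin's $\Omega$ is a $\Delta^0_2$ Martin-L\"of random sequence satisfying $\Omega \equiv_T \emptyset'$, so setting $A = \Omega$ fulfills the hypotheses of Theorem \ref{thm:mlr-A}. That theorem then produces a trivial computable measure $\mu$ with $\MLR_\mu^\Omega = \atom_\mu$ and $\MLR_\mu \setminus \MLR_\mu^\Omega \neq \emptyset$.

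It then remains to observe that $\MLR_\mu^\Omega = \twoMLR_\mu$. By definition $\twoMLR_\mu = \MLR_\mu^{\emptyset'}$, and relative $\mu$-Martin-L\"of randomness depends only on the Turing degree of the oracle: given $A_1 \equiv_T A_2$, every $\mu$-Martin-L\"of test relative to $A_1$ is, uniformly in the reduction, a $\mu$-Martin-L\"of test relative to $A_2$, and vice versa. Since $\Omega \equiv_T \emptyset'$, this gives $\MLR_\mu^\Omega = \MLR_\mu^{\emptyset'} = \twoMLR_\mu$. This is exactly the same observation that justifies the remark in Section \ref{sec:background} that $X \in \twoMLR$ if and only if $X \in \MLR^A$ for some $A \equiv_T \emptyset'$, applied here to $\mu$-randomness rather than randomness with respect to Lebesgue measure.

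Substituting $\twoMLR_\mu$ for $\MLR_\mu^\Omega$ in the two conclusions of Theorem \ref{thm:mlr-A} yields parts ($i$) and ($ii$) of the corollary, and the triviality of $\mu$ is guaranteed by the theorem as well. There is essentially no substantial obstacle; once Theorem \ref{thm:mlr-A} is in hand, the corollary is just its instantiation at $A = \Omega$, with the Turing-invariance of relative randomness supplying the final identification with $\twoMLR_\mu$.
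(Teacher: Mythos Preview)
Your proposal is correct and follows exactly the same approach as the paper: choose $A=\Omega$ (or any $A\in\Delta^0_2\cap\MLR$ with $A\equiv_T\emptyset'$) and apply Theorem~\ref{thm:mlr-A}, using the Turing-invariance of relative Martin-L\"of randomness to identify $\MLR_\mu^\Omega$ with $\twoMLR_\mu$. The paper's proof is in fact terser, leaving the invariance step implicit, so your added justification is fine but not strictly necessary.
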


\begin{proof}
Choose any $A\in\Delta^0_2\cap\MLR$ such that $A\equiv_T\emptyset'$ (for instance, let $A=\Omega$) and apply Theorem \ref{thm:mlr-A}.
\end{proof}

\subsection{Separating Martin-L\"of randomness and weak 2-randomness}

We can improve Corollary \ref{cor:2mlr} by replacing $\twoMLR$ with $\WtwoR$.  However, we need to use a different technique to do so:  we will use the characterization provided by Theorem \ref{thm:w2r-minpair}, that for each non-computable $X$, $X\in\WtwoR_\mu$ if and only $X\in\MLR_\mu$ and $X$ forms a minimal pair with $\emptyset'$.

\begin{theorem}
There is a trivial measure $\mu\in\M_c$ such that
\begin{itemize}
\item[($i$)] $\WtwoR_\mu=\atom_\mu$, and
\item[($ii$)] $\MLR_\mu\setminus\WtwoR_\mu\neq\emptyset$.
\end{itemize}
\end{theorem}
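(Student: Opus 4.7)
The plan is to reuse the tally-functional construction from Theorem \ref{thm:delta2} with the specific choice $A = \Omega$ (or any other $\Delta^0_2$ Martin-L\"of random sequence of Turing degree $\mathbf{0}'$), and set $\mu = \lambda_{\Phi_A}$. Theorem \ref{thm:delta2} then already hands us two of the three facts we need for free: $\mu$ is trivial, and $\MLR_\mu \setminus \atom_\mu = \{\Phi_A(A)\}$. In particular, $\Phi_A(A) \in \MLR_\mu \setminus \atom_\mu$, which immediately yields part (ii), since $\WtwoR_\mu \subseteq \MLR_\mu$ and we will momentarily show $\Phi_A(A) \notin \WtwoR_\mu$.

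The key additional observation is the Turing reduction $A \leq_T \Phi_A(A)$. Given $\Phi_A(A) = 1^{\theta(A,0)}\, 0\, 1^{\theta(A,1)}\, 0\, 1^{\theta(A,2)}\, 0\, \dotsc$, one recovers the function $n \mapsto \theta(A,n)$ by counting the lengths of the $1$-blocks between successive $0$s; then, because $A_{\theta(A,n+1)} \uh (n+1) = A \uh (n+1)$ by definition of $\theta$, the value $A(n)$ is just $A_{\theta(A,n+1)}(n)$, which is uniformly computable from $\Phi_A(A)$. Since we have chosen $A$ with $A \equiv_T \emptyset'$, this gives $\emptyset' \leq_T \Phi_A(A)$.

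Now I apply Theorem \ref{thm:w2r-minpair} to the noncomputable sequence $\Phi_A(A)$: $\emptyset'$ is a noncomputable common lower bound of $\Phi_A(A)$ and $\emptyset'$ in the Turing degrees, so they do not form a minimal pair, whence $\Phi_A(A) \notin \WtwoR_\mu$. Combined with the general inclusion $\atom_\mu \subseteq \WtwoR_\mu$ (every atom of $\mu$ is random in every sense we consider) and the already-established equality $\MLR_\mu \setminus \atom_\mu = \{\Phi_A(A)\}$, this forces $\WtwoR_\mu = \atom_\mu$, giving (i). There is essentially no obstacle here: the heavy lifting lives in Theorems \ref{thm:delta2} and \ref{thm:w2r-minpair}, and the only ingredient one must actually verify by hand is the (easy) computation $A \leq_T \Phi_A(A)$ from the tally structure.
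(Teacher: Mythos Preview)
Your proof is correct, and it is a cleaner route than the paper's. The paper starts from an arbitrary $A \in \MLR \setminus \WtwoR$, extracts via Theorem~\ref{thm:w2r-minpair} a noncomputable $\Delta^0_2$ set $B$ and a functional $\Gamma$ with $\Gamma(A) = B$, and then builds a \emph{new} tally functional $\Phi_\Theta$ tracking agreement between $\Gamma_s(X)$ and $B_s$; the case analysis must handle the possible divergence of $\Gamma(X)$ and the various ways $\Gamma(X)$ can relate to $B$. You instead fix $A = \Omega \in \MLR \cap \Delta^0_2$ up front and reuse the tally functional and Theorem~\ref{thm:delta2} wholesale, so the only new work is the easy reduction $A \leq_T \Phi_A(A)$. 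In fact the paper's construction, specialized to $\Gamma = \id$ and $B = A = \Omega$, collapses exactly to yours, so your argument is the ``canonical special case'' that already suffices for the existential statement. What the paper's version buys is generality (it applies to any $A \in \MLR \setminus \WtwoR$, not just $\Delta^0_2$ ones); what your version buys is brevity and no new case analysis. One minor remark: you do not actually need $A \equiv_T \emptyset'$; any noncomputable $A \in \MLR \cap \Delta^0_2$ works, since $A$ itself (rather than $\emptyset'$) is already a noncomputable set below both $\Phi_A(A)$ and $\emptyset'$.
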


\begin{proof}
Let $A\in\MLR\setminus\WtwoR$.  Then by Theorem \ref{thm:w2r-minpair} there is some Turing functional $\Gamma$ and a non-computable $\Delta^0_2$ sequence $B$ such that $\Gamma(A)=B$.  Let $\Theta(X,n,s)$ be such that
\[
\Theta(X,n,s)\;\text{holds if and only if}\;(\forall k<n)\Gamma_s(X)(k)\halts=B_s(k),
\] 
where $(B_s)_{s\in\omega}$ is some fixed $\Delta^0_2$ approximation of $B$. Without loss of generality, $B_s\neq B_{s+1}$ for every $s\in\omega$.  Then it follows that $\theta$ is defined to be
\[
\theta(X,n)=\twopartdefow{\text{the least } s \text{ such that }(\forall k<n)\Gamma_s(X)(k)\halts=B_s(k)}{s\;\text{exists}}{+\infty}{}.
\]
Given $X\in\cs$, to compute $\Phi_\Theta(X)$, we have four cases to consider, each depending on the behavior of $\Gamma$ on input $X$.\\  

\noindent\emph{Case 1:} $\Gamma(X)\diverge$.  Then there is some least $n$ such that $\Gamma(X)(n)\diverge$, and so $\theta(X,k)=+\infty$ for some $k\leq n+1$ (there may be some $k<n+1$ such that $\Gamma(X)(k)\halts$ but $\Gamma(X)(k)\neq B_s(k)$ for every $s\in\omega$).  Consequently,
\begin{equation}\label{eq:w2r-triv1}
\Phi_\Theta(X)=\sigma1^\omega
\end{equation}
for some $\sigma\in\str$.\\ 

%First observe that for any $k>t$, $\Gamma(X)(k)$ halts in at least $k$ steps, given the standard convention on the running times of computations (i.e., the running time must always exceed the sizes of the input and the output).  

\noindent\emph{Case 2:}  $\Gamma(X)\halts=B_t$ for some $t\in\omega$.  Since $B_s\neq B_{s+1}$ for every $s\in\omega$, there is some least $n$ such that for all sufficiently large $s\geq n$,
\[
(\forall k<n)[\Gamma_s(k)\halts=B_t(k)]
\]
but
\[
(\exists j<n)[B_s(j)\neq B_t(j)].
\]
That is, the functional $\Phi_\Theta$ on input $X$ waits to see $\Gamma_s(X)$ agree with $B_s$, but for sufficiently large stages $s$, $\Gamma_s(X)$ only agrees with $B_t$.  Hence for the $n$ given above, we have $\theta(X,n)=+\infty$, and thus  (\ref{eq:w2r-triv1}) holds. \\

\noindent\emph{Case 3:}  $\Gamma(X)\halts$ but $\Gamma(X)\neq B$ and $\Gamma(X)\neq B_s$ for every $s\in\omega$.  In this case we have
\begin{equation}\label{eq:w2r-triv2}
(\exists n)(\forall s)(\exists k<n)[\Gamma_s(X)(k)\neq B_s(k)].
\end{equation}
The argument to establish this claim is the same as the one we gave in the proof of Lemma \ref{lem:tally-A}($ii$).  Let $n$ be the least number satisfying  (\ref{eq:w2r-triv2}). Then we have $\theta(X,n)=+\infty$, and thus (\ref{eq:w2r-triv1}) holds. \\

\noindent\emph{Case 4:}  $\Gamma(X)\halts=B$.  Setting $h(n):=\theta(X,n)$, we have
\[
\Phi_\Theta(X)=1^{h(0)}01^{h(1)}0\dotsc,
\]
so that $\Phi_\Theta(X)\geq_T h$. But for every  $n\in\omega$, 
\[
B_{h(n)}\uh n=\Gamma_{h(n)}(X)\uh n=\Gamma(X)\uh n=B\uh n,
\] 
so it follows that $\Phi_\Theta(X)\geq_T B$.\\

To establish ($i$), $\atom_\mu\subseteq\WtwoR_\mu$ is immediate.  For the other direction, consider $Y\in\WtwoR_\mu$, which by Theorem \ref{thm:w2r-minpair} forms a minimal pair with $\emptyset'$.  Since $Y\in\MLR_\mu$, by Theorem \ref{thm:ex-nihilo} there is some $X\in\MLR$ such that $\Phi_\Theta(X)=Y$.  

Applying the functional $\Gamma$ to $X$ yields one of two general outcomes:  either one of Case 1, 2, or 3 occurs, in which case  
\[
Y=\Phi_\Theta(X)=\sigma1^\omega
\]
and hence $Y\in\atom_\mu$ by Proposition \ref{prop:atoms}($ii$), or we are in Case 4.  But in this case, $Y=\Phi_\Theta(X)\geq_T B$, contradicting the fact that $Y$ forms a minimal pair with $\emptyset'$.  So we must have $Y\in\atom_\mu$.

For part ($ii$), if we take the original $A\in\MLR\setminus\WtwoR$ such that $\Gamma(A)=B$ that we started with, by the preservation of Martin-L\"of randomness, $\Phi_\Theta(A)\in\MLR_\mu$.  In addition, by Case 4 above, $\Phi_\Theta(A)\geq_T B$, so $\Phi_\Theta(A)$ does not form a minimal pair with $\emptyset'$.  Hence $\Phi_\Theta(A)\notin\WtwoR_\mu$.

\end{proof}

\subsection{Separating Martin-L\"of randomness and Schnorr randomness}

We end this section by using a tally functional to show that Schnorr's claim, namely that $\MLR_\mu=\SR_\mu$ for a computable measure $\mu$ if and only if $\mu$ is trivial, is false.  Here we will use yet another technique for constructing the requisite tally functional.

\begin{theorem}
There is a trivial measure $\mu\in\M_c$ such that 
\begin{itemize}
\item[($i$)] $\MLR_\mu=\atom_\mu$, and
\item[($ii$)] $\SR_\mu\setminus\MLR_\mu\neq\emptyset$.
\end{itemize}
\end{theorem}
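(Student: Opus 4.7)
The plan is to take $\mu$ to be the pushforward of Lebesgue measure under a tally functional $\Phi_\Theta$ whose defining formula encodes membership in the universal Martin-L\"of test. Fix any $A \in \SR \setminus \MLR$ (such sequences exist by classical results; see, e.g., \cite{DowHir10}) and fix a universal $\lambda$-Martin-L\"of test $(\widehat{\U}_n)_{n \in \omega}$. Define the $\Delta^0_1$ predicate $\Theta(X, n, s)$ to hold iff some string $\tau$ of length $\leq s$ has been enumerated into $\widehat{\U}_n$ by stage $s$ with $\tau \prec X$, so that $\theta(X, n) < +\infty$ precisely when $X \in \widehat{\U}_n$. Let $\Phi_\Theta$ be the associated tally functional and set $\mu := \lambda_{\Phi_\Theta}$.

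First verify that $\Phi_\Theta$ is a $tt$-functional: if $X \in \MLR$, there is a least $n_0$ with $X \notin \widehat{\U}_{n_0}$, so $\theta(X, n_0) = +\infty$ and $\Phi_\Theta(X) = \sigma 1^\omega$ for some $\sigma \in \str$; if $X \notin \MLR$, every $\theta(X, n)$ is finite and $\Phi_\Theta(X)$ is infinite. Hence $\mu \in \M_c$, and since $\lambda(\MLR) = 1$ while $\Phi_\Theta$ sends $\MLR$ into the countable set $\{\sigma 1^\omega : \sigma \in \str\}$, the measure $\mu$ is trivial. For (i), the inclusion $\atom_\mu \subseteq \MLR_\mu$ is Proposition \ref{prop:atoms}(ii); conversely, any $Y \in \MLR_\mu$ equals $\Phi_\Theta(X)$ for some $X \in \MLR$ by Theorem \ref{thm:ex-nihilo}, so $Y$ has the form $\sigma 1^\omega$ and is computable, whence Proposition \ref{prop:atoms}(iii) forces $\mu(\{Y\}) > 0$, placing $Y$ in $\atom_\mu$.

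For (ii), set $Y := \Phi_\Theta(A)$. By Theorem \ref{thm:preservation-sr}, $Y \in \SR_\mu$. The proof of Proposition \ref{prop:sr-high} shows that the function $n \mapsto \theta(A, n)$ dominates every computable function --- any would-be computable bound $g$ makes $(\widehat{\U}_{n, g(n)})_{n \in \omega}$ witness $A \notin \SR$ --- so in particular $\theta(A, \cdot)$ is non-computable. Since $\theta(A, n)$ is read off $Y$ by counting the ones between successive zeros, $Y$ is non-computable, hence $Y \notin \atom_\mu$, and by (i) $Y \notin \MLR_\mu$. The main subtlety is arranging the two competing demands on $\Theta$: it must collapse every $\MLR$ input to one of countably many computable outputs (to force $\MLR_\mu = \atom_\mu$) while still preserving enough of the chosen $A$ to yield a non-computable Schnorr-random output. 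Encoding the universal Martin-L\"of test accomplishes both, because being in $\MLR$ is precisely what cuts the tally short with a trailing $1^\omega$, while Proposition \ref{prop:sr-high} guarantees that for $A$ the tally blocks grow uncomputably fast.
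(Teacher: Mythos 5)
Your proposal is correct and follows essentially the same route as the paper: define a tally functional from a universal Martin-L\"of test so that $\MLR$ inputs are collapsed to sequences of the form $\sigma 1^\omega$ (giving triviality and $\MLR_\mu=\atom_\mu$ via Theorem \ref{thm:ex-nihilo} and Proposition \ref{prop:atoms}), while a Schnorr-random non-ML-random input yields, by preservation of Schnorr randomness and the domination argument from Proposition \ref{prop:sr-high}, a non-computable element of $\SR_\mu\setminus\MLR_\mu$. No gaps.
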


\begin{proof}
To construct the desired measure $\mu$, we define a tally functional in terms of a universal $\mu$-Martin-L\"of test $(\U_i)_{i\in\omega}$.  Let $\Theta(A,n,s)$ be such that 
\[
\Theta(A,n,s)\;\text{if and only if}\;(\exists k<s)\llb A\uh k\rrb\subseteq\mathcal{U}_{n,s}.
\]
As above, $\theta(A,n)$ is the least such $s$ such that $\Theta(A,n,s)$ holds, or is $+\infty$ if $\Theta(A,n,s)$ fails to hold for every $s$.

There are two cases of interest to us (the case that $X\not\in\SR$ has no bearing on the result here).\\

\noindent\emph{Case 1:  }$X\in\MLR$.  In this case, there is some least $n$ such that $X\notin\mathcal{U}_n$, and hence $\theta(X,n)=+\infty$, so that
\[
\Phi_\Theta(X)=\sigma 1^\omega
\]
for some $\sigma\in\str$.\\

\noindent\emph{Case 2:  }$X\in\SR\setminus\MLR$. Then $X\in\bigcap_{i\in\omega}\mathcal{U}_i$, and moreover, by the proof of Theorem \ref{prop:sr-high}, the function $f\leq_T X$ such that
\[
f(n)=\;\text{the least $s$ such that}\;(\exists k)\llb X\uh k\rrb\subseteq\mathcal{U}_{n,s}.
\]
 dominates all computable functions.  It follows that $\theta(X,n)=f(n)$, so that
\[
\Phi_\Theta(X)=1^{f(0)}\;0\;1^{f(1)}\;0\;\dotsc
\]
is not computable.\\

To verify ($i$), as above $\atom_\mu\subseteq\MLR_\mu$ is immediate.  Now given $Z\in\MLR_\mu$, by Theorem \ref{thm:ex-nihilo}, there is some $X\in\MLR$ such that $\Phi_\Theta(X)=Z$.  But we are in Case 1, so $Z=\sigma1^\omega$ for some $\sigma\in\str$.  By Proposition \ref{prop:atoms}($iii$), $Z\in\atom_\mu$.

For ($ii$), given any $X\in\SR\setminus\MLR$, by the preservation of Schnorr randomness, we have $\Phi_\Theta(X)\in\SR_\mu$.  Since we are in Case 2, $\Phi_\Theta(X)$ is not computable, and thus by Proposition \ref{prop:atoms}($i$) $\Phi_\Theta(X)\notin\atom_\mu$.  Thus by part ($i$), $\Phi_\Theta(X)\notin\MLR_\mu$.

\end{proof}

We have shown that one direction of Schnorr's claim is false, namely that $\mu$ being trivial does not imply that $\MLR_\mu=\SR_\mu$.  However, the status of the other direction of Schnorr's claim is also unclear.

\begin{question}
For $\mu\in\M_c$, does $\MLR_\mu=\SR_\mu$ imply that $\mu$ is trivial?
\end{question}
%$\MLR_\mu=\SR_\mu$ implies that $\mu$ is trivial, the converse does hold: one can generalize the proof of Satz 5.1 in \cite{Sch69}, arguing that if $\mu$ is not trivial, then there is some $X\in\SR_\mu\setminus\MLR_\mu$.

\section{Trivial Measures and Finite Distributive Lattices}\label{sec:fdl}

As further evidence of the non-triviality of trivial measures, we show that each trivial measure gives rise to a certain degree structure.  Specifically, if one considers the $LR$-degrees (or ``low-for-random" degrees) associated with $\MLR_\mu$ for a trivial measure $\mu$, one finds that different trivial measures can give rise to non-isomorphic $LR$-degree structures.   

Nies \cite{Nie05} gave the following definition in the context of Martin-L\"of randomness with respect to the Lebesgue measure.  We say that
$A$ is \emph{$LR$-reducible} to $B$, denoted $A\LRr B$ if 
\[
\MLR^B\subseteq\MLR^A.
\]
The intuitive idea is that $B$ is more powerful than $A$ as an oracle, as $B$ de-randomizes more sequences than $A$ does. We say that $A$ is \emph{$LR$-equivalent} to $B$, denoted $A\LRe B$ if and only if $A\leq_{LR}B$ and $B\LRr A$.  The $LR$-equivalence classes are called \emph{$LR$-degrees}; the collection of $LR$-degrees is denoted $\LRD$.  Like the Turing degrees $\D_T$, the $LR$-degrees form an uncountable upper semilattice.  However, unlike the structure of the Turing degrees ($\D_T,\leq_T$), the structure of ($\LRD,\LRr$) is not well-understood.

We can extend the definition of $\LRr$ to Martin-L\"of randomness with respect to any $\mu\in\M_c$ as follows.  For $\mu\in\M_c$ and $A,B\in\cs$, we say that $A$ is $LR(\mu)$-reducible to $B$, denoted $A\LRrmu B$ if
\[
\MLR_\mu^B\subseteq\MLR_\mu^A.
\]
Similarly, we can define the $LR(\mu)$-degrees, denoted $\LRDmu$, just as we defined the $LR$-degrees above.  Interestingly, for certain choices of $\mu\in\M_c$, the structure ($\LRD,\LRr$) is very simple.  Let us consider some examples.

\begin{example}
Let $\mu\in\M_c$ be such that $\atom_\mu=\MLR_\mu$ and hence~$\mu(\atom_\mu)=1$.  Then 
$\LRDmu$ consists of a single equivalence class, consisting of all of $\cs$.  The reason is that if $\mu(\{X\})>0$, then $X\in\MLR_\mu^A$ for every $A\in\cs$.
\end{example}

\begin{example}
If $\mu$ is the measure induced by the tally functional $\Phi_A$ for \linebreak $A\in\Delta^0_2\cap\MLR$ (as in the example from Section \ref{sec:tally}), then $\LRDmu$ consists of exactly two elements.  
If we set $A^*:=\Phi_A(A)$, then by Theorem \ref{thm:delta2}($ii$),
\[
\MLR_\mu=\{A^*\}\cup\atom_\mu
\]
where $A^*$ is not computable.  Since $\Phi_A^{-1}(A^*)=\{A\}$, by Theorem \ref{thm:rel-lk},
\[
A\in\MLR^B \Leftrightarrow A^*\in\MLR_\mu^B
\]
 for every $B\in\cs$.  Then there are exactly two $LR(\mu)$-degrees:
\begin{itemize}
\item[] $\mathbf{0}=\{B:A\in\MLR^B\}$,
\item[] $\mathbf{1}=\{B:A\notin\MLR^B\}$.
\end{itemize}
\end{example}

\begin{example}
Let $A\oplus B\in\MLR\cap\Delta^0_2$, and let $\Phi_A$ and $\Phi_B$ be the tally functionals defined in terms of $\Delta^0_2$ approximations of $A$ and $B$, respectively. Next, let $\mu_0$ and $\mu_1$ be the measures induced by $\Phi_A$ and $\Phi_B$, respectively.  By Theorem \ref{thm:delta2}($ii$), 
\[
\MLR_{\mu_0}=\{\Phi_A(A)\}\cup\atom_{\mu_0} \text{ and } \MLR_{\mu_1}=\{\Phi_B(B)\}\cup\atom_{\mu_1}.
\]

If we set
$\nu:=\dfrac{\mu_0+\mu_1}{2}$, by Lemma \ref{lem:sum-meas} we have
\[
\MLR_\nu=\MLR_{\mu_0}\cup\MLR_{\mu_1}=\{\Phi_A(A),\Phi_B(B)\}\cup\atom_{\mu_0}\cup\atom_{\mu_1}.
\]
Clearly, $\nu$ is trivial.  There are exactly four $LR(\nu)$-degrees; namely $\mathbf{0},\mathbf{a},\mathbf{b}$, and $\mathbf{1}$, where (using Theorem \ref{thm:rel-lk} and van Lambalgen's theorem)
\begin{itemize}
\item[] $\mathbf{0}=\{X:A\in\MLR^X\aand B\in\MLR^X \}$,
\item[] $\mathbf{a}=\{X:A\in\MLR^X\aand B\notin\MLR^X \}$,
\item[] $\mathbf{b}=\{X:A\notin\MLR^X\aand B\in\MLR^X \}$,
\item[] $\mathbf{1}=\{X:A\notin\MLR^X\aand B\notin\MLR^X \}$.
\end{itemize} 
In particular, we have $\mathbf{0}<\mathbf{a}<\mathbf{1}$ and
$\mathbf{0}<\mathbf{b}<\mathbf{1}$, but $\mathbf{a}$ and $\mathbf{b}$ are incomparable.  Thus, $\LRDnu$ is isomorphic to the finite Boolean algebra on two atoms, pictured in Figure \ref{fig:diamond}.
\end{example}

\begin{figure}[h!tb]
  \begin{center}
       \centerline{ \includegraphics[scale=.75]{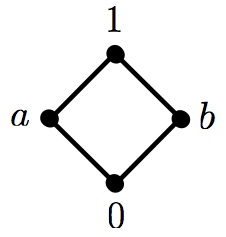}}
    \caption{The finite Boolean algebra on two atoms}
    \label{fig:diamond}
  \end{center}
\end{figure}

In the previous three examples we have defined trivial measures $\mu$ such that the associated $LR(\mu)$-degrees are isomorphic to the finite Boolean algebra of one, two, and four elements, respectively.  Thus, it is natural to consider whether there is such a measure for every finite Boolean algebra.  

\begin{theorem}\label{thm-ba}
For every finite Boolean algebra $\B=(B,\leq)$, there is a trivial measure $\mu\in\M_c$ such that 
\[
(\LRDmu,\LRrmu) \cong (B,\leq).
\]
\end{theorem}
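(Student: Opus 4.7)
The plan is to realize the Boolean algebra $(B,\leq)$ as the powerset lattice $(\P(\{0,\dotsc,n-1\}),\subseteq)$, where $n$ is the number of atoms of $\B$, by constructing a trivial $\mu$ with exactly $n$ ``singular'' random points corresponding to $n$ independent sources of $LR$-information. Generalizing the two- and four-element examples, I would start with $A_0,\dotsc,A_{n-1}\in\Delta^0_2$ chosen so that $A_0\oplus\dotsb\oplus A_{n-1}\in\MLR$; for instance, take the first $n$ columns of $\Omega$. For each $i<n$, let $\Phi_{A_i}$ be the tally functional from Section~\ref{sec:tally}, and set $\Psi_i(X)=s_i\Phi_{A_i}(X)$, where $\{s_i\}_{i<n}\subseteq\str$ is a prefix-free antichain (for example distinct strings of length $\lceil\log_2 n\rceil$). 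Let $\mu_i=\lambda_{\Psi_i}$ and $\mu=\frac1n\sum_{i<n}\mu_i$, a computable convex combination. Because the range of $\Psi_i$ lies in the clopen cylinder $\llb s_i\rrb$ and these cylinders are pairwise disjoint, one has $\atom_\mu=\bigsqcup_i\atom_{\mu_i}$ and $\mu(\atom_\mu)=\sum_{i<n}\frac1n\mu_i(\atom_{\mu_i})=1$, so $\mu$ is trivial.

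Next I would compute $\MLR_\mu^X$ for an arbitrary $X\in\cs$. Iterating Lemma~\ref{lem:sum-meas} (whose proof relativizes to any oracle) gives $\MLR_\mu^X=\bigcup_{i<n}\MLR_{\mu_i}^X$. The case analysis in the proof of Theorem~\ref{thm:delta2} goes through for each $\Psi_i$ verbatim: the unique non-computable image of $\Psi_i$ is $A_i^*:=s_i\Phi_{A_i}(A_i)$, and $\Psi_i^{-1}(A_i^*)=\{A_i\}$. Hence Theorem~\ref{thm:rel-lk} gives $A_i^*\in\MLR_{\mu_i}^X$ if and only if $A_i\in\MLR^X$, while for $j\neq i$ the point $A_i^*$ lies in the clopen set $\llb s_i\rrb$ of $\mu_j$-measure zero and so cannot be in $\MLR_{\mu_j}$ at all. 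Combining,
\[
\MLR_\mu^X=\atom_\mu\cup\{A_i^*:i<n \text{ and } A_i\in\MLR^X\},
\]
so $X\LRrmu Y$ is equivalent to $\{i:A_i\notin\MLR^X\}\subseteq\{i:A_i\notin\MLR^Y\}$. Writing $S(X)$ for the set on the left, the map $[X]\mapsto S(X)$ is well-defined, injective, and order-preserving from $\LRDmu$ into $(\P(\{0,\dotsc,n-1\}),\subseteq)$.

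It remains to verify surjectivity. For each $S\subseteq\{0,\dotsc,n-1\}$ I would exhibit $X_S$ with $S(X_S)=S$ by setting $X_S=\bigoplus_{i\in S}A_i$ (with $X_\emptyset:=\emptyset$). For $i\in S$, $A_i\leq_T X_S$ gives $A_i\notin\MLR^{X_S}$; for $i\notin S$, iterated application of van Lambalgen's theorem (in the style of Theorem~\ref{thm:bigoplus}) to the $\MLR$ join $A_0\oplus\dotsb\oplus A_{n-1}$ yields $A_i\in\MLR^{X_S}$. Hence all $2^n$ subsets are realized and the map is the desired order isomorphism. The main obstacle is ensuring the clean ``disjoint cylinder'' decomposition of $\MLR_\mu^X$; the prefix trick is what forces $A_i^*$ to be $\mu$-random relative to $X$ precisely when $A_i\in\MLR^X$, and without it one would have to argue directly that the bare sequences $\Phi_{A_i}(A_i)$ are pairwise distinct and only visible through their own $\mu_i$-component, which is considerably less transparent.
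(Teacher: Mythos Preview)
Your proposal is correct and follows essentially the same approach as the paper: choose $n$ mutually random $\Delta^0_2$ sequences (the paper also uses the columns of a $\Delta^0_2$ random, via Theorem~\ref{thm:bigoplus}), build the tally-functional measures $\mu_i$, average them, and identify $\LRDmu$ with $\P(\{0,\dotsc,n-1\})$ via $X\mapsto\{i:A_i\notin\MLR^X\}$. The one difference is cosmetic: the paper does \emph{not} use your prefix trick, instead relying implicitly on the fact that each $A_i^*$ is Turing-equivalent to $A_i$, so the $A_i^*$ are pairwise distinct (since mutually random reals cannot be Turing-comparable) and hence $A_i^*\notin\MLR_{\mu_j}$ for $j\neq i$; your disjoint-cylinder device makes this step more transparent but is not needed.
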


\begin{proof}  First, we fix some notation.  Let $\Deg_{LR(\mu)}(X)=\{Y:X\equiv_{LR(\mu)}Y\}$, and set
\[
\Deg_{LR(\mu)}(X)\leq\Deg_{LR(\mu)}(Y)
\] if and only $X\LRrmu Y$. Now, we proceed in four steps:\\

\noindent
\textbf{Step 1:}  If $n$ is the number of atoms of $\B$, choose $A_1, A_2,\dotsc,A_n\in\MLR\cap\Delta^0_2$ such that for each $J\subseteq\{1,\dotsc, n\}$, if 
\[
X_J=\bigoplus_{j\in J}A_j
\]
then
\[
A_i\in\MLR^{X_J}
\]
for every $i\notin J$. (This can be accomplished using Theorem \ref{thm:bigoplus}.)\\

\medskip
\noindent
\textbf{Step 2:} For each $A_i$, let $\Phi_{A_i}$ be the tally functional defined in terms of a fixed $\Delta^0_2$ approximation of $A_i$, and define $\mu_i$ to be the measure induced by the tally functional $\Phi_{A_i}$.  Let $A_i^*=\Phi_{A_i}(A_i)$.

\medskip
\noindent
\textbf{Step 3:}  Define $\mu:=\frac{1}{n}\sum_{i=1}^n\mu_i$.  It follows from Lemma \ref{lem:sum-meas} that 
\[
\MLR_\mu=\bigcup_{i=1}^n\MLR_{\mu_i}=\{A_1^*,A_2^*,\dotsc,A_n^*\}\cup\bigcup_{i=1}^n\atom_{\mu_i}.
\]

\medskip
\noindent
\textbf{Step 4:}  We verify that for $J,K\subseteq\{1,\dotsc,n\}$, $\Deg_{LR(\mu)}(X_J)\leq \Deg_{LR(\mu)}(X_K)$ if and only if $J\subseteq K$.  First, note that
\[
\MLR_\mu^{X_J}=\{A_i^*:i\not\in J\}\;\text{and}\;\MLR_\mu^{X_K}=\{A_i^*:i\not\in K\}.
\]
Then 
\begin{equation*}
\begin{split}
\Deg_{LR(\mu)}(X_J)\leq \Deg_{LR(\mu)}(X_K)&\Leftrightarrow\MLR_\mu^{X_K}\subseteq\MLR_\mu^{X_J}\\
&\Leftrightarrow\{A_i^*:i\not\in K\}\subseteq\{A_i^*:i\not\in J\}\\
&\Leftrightarrow J\subseteq K.
\end{split}
\end{equation*}
Thus, $\LRDmu=\{\Deg_{LR(\mu)}(X_J):J\subseteq\{1,\dotsc,n\}\}$ is isomorphic to the powerset of $\{1,\dotsc,n\}$, which is isomorphic to $\B$.
\end{proof}

Theorem \ref{thm-ba} can be improved further:

\begin{theorem}\label{thm-fdl}
For every finite distributive lattice $(L,\leq)$, there is a trivial measure $\mu\in\M_c$ such that 
\[
(\mathscr{D}_{LR(\mu)},\leq_{LR(\mu)}) \cong (L,\leq).
\]
\end{theorem}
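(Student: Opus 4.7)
The plan is to generalize Theorem \ref{thm-ba} by replacing the ``independent'' generators $A_1,\ldots,A_n$ with a family $B_1,\ldots,B_n$ of $\oplus$-sums whose combinatorial structure encodes the poset of join-irreducibles of $L$. By Birkhoff's representation theorem, $L$ is isomorphic to $\mathcal{J}(P)$, the lattice of down-sets of a finite poset $(P,\leq_P)$ ordered by inclusion, where $P$ is the poset of join-irreducibles of $L$; write $P=\{p_1,\ldots,p_n\}$.

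As in Step 1 of Theorem \ref{thm-ba}, fix $A\in\MLR\cap\Delta^0_2$ (e.g.\ $\Omega$) and set $A_j=A^{[j]}$ for $j=1,\ldots,n$, so that by Theorem \ref{thm:bigoplus}, $A_i\in\MLR^{\bigoplus_{j\in J}A_j}$ whenever $i\notin J\subseteq\{1,\ldots,n\}$. For each $i$ let $F_i=\{j:p_j\geq_P p_i\}$ be the principal filter of $p_i$ in $P$ and define
\[
B_i=\bigoplus_{j\in F_i}A_j,
\]
which lies in $\MLR\cap\Delta^0_2$ by iterated van Lambalgen's theorem. Let $\Phi_{B_i}$ be the tally functional of Section \ref{sec:tally} built from a $\Delta^0_2$ approximation of $B_i$, set $\mu_i=\lambda_{\Phi_{B_i}}$, and let $\mu=\tfrac{1}{n}\sum_{i=1}^n\mu_i$. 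Each $\mu_i$ is trivial by Theorem \ref{thm:delta2}($i$), hence so is $\mu$. Setting $B_i^*=\Phi_{B_i}(B_i)$ and combining Theorem \ref{thm:delta2}($ii$), the relativized form of Lemma \ref{lem:sum-meas}, and Theorem \ref{thm:rel-lk}, for every $X\in\cs$ we obtain
\[
\MLR_\mu^X=\atom_\mu\cup\{B_i^*:B_i\in\MLR^X\}.
\]
A harmless modification of the $\Phi_{B_i}$'s (e.g.\ prepending a distinguishing prefix $0^{i-1}1$ to each output) guarantees the $B_i^*$'s are pairwise distinct, so the $LR(\mu)$-degree of $X$ is determined by $S(X):=\{i:B_i\in\MLR^X\}$, with $X\LRrmu Y$ iff $S(Y)\subseteq S(X)$.

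The heart of the proof is to show that $\{S(X):X\in\cs\}$ is exactly the collection of up-sets of $P$. For the constraint, if $p_{i'}\geq_P p_i$ then $F_{i'}\subseteq F_i$, so $B_{i'}$ is a sub-$\oplus$-sum of $B_i$, and iterated van Lambalgen's theorem forces $B_i\in\MLR^X\Rightarrow B_{i'}\in\MLR^X$; hence $S(X)$ is upward closed in $P$. For realization, given an up-set $U\subseteq P$, set $X_U=\bigoplus_{j:p_j\notin U}A_j$. When $p_i\in U$, $F_i\subseteq U$ is disjoint from the index set of $X_U$, so iterated van Lambalgen combined with Theorem \ref{thm:bigoplus} gives $B_i\in\MLR^{X_U}$. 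When $p_i\notin U$, the component $A_i$ of $B_i$ is also a component of $X_U$, so $A_i\notin\MLR^{X_U}$ and hence $B_i\notin\MLR^{X_U}$. Thus $S(X_U)=U$.

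Putting these together, $\LRDmu$ is in bijection with the up-sets of $P$ ordered by reverse inclusion; complementation $U\mapsto P\setminus U$ is an order isomorphism onto $\mathcal{J}(P)$ under inclusion, so $\LRDmu\cong\mathcal{J}(P)\cong L$. I expect the realization step to be the main obstacle: one must verify carefully, using iterated van Lambalgen's theorem together with Theorem \ref{thm:bigoplus}, that the joint relative randomness of the $A_j$'s (not merely pairwise independence) lets the sub-$\oplus$-sum structure of the $B_i$'s faithfully translate the Hasse diagram of $P$ into the realized $S(X)$'s.
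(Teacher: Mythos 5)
Your proposal is correct, and it reaches Theorem \ref{thm-fdl} by a genuinely different route than the paper. The paper never invokes Birkhoff's representation theorem: it builds, level by level from the top of $L$, a lattice of finite sets of sequences (the empty set at the top, singletons of basic sequences at Level 2, unions at meet-reducible elements, and a fresh join $\bigoplus_j A_{i_j}\oplus A_N$ added at each meet-irreducible element), must then check that this assignment is well defined at meets (an argument using the non-embeddability of $M_3$, i.e.\ distributivity, appears here) and prove by induction on levels that it is a lattice isomorphism, and finally carries out a somewhat delicate analysis, via randomness scopes and the sets $J$ and $\widehat K$ of Level Two and meet-irreducible basic sequences, to show that every oracle $X$ realizes exactly one of the sets $\S_a$. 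You instead push all the lattice theory into Birkhoff's theorem and code each join-irreducible $p_i$ by the join $B_i=\bigoplus_{j\in F_i}A_j$ over its principal filter; distributivity is used only through Birkhoff, and the verification collapses to two clean steps: the up-set constraint (monotonicity of relative randomness under sub-joins, by relativized van Lambalgen, Theorem \ref{thm:vlt}) and realization of each up-set $U$ by the complementary join $X_U$. Both arguments rest on the same machinery --- tally functionals, Theorem \ref{thm:delta2}, the relativized form of Lemma \ref{lem:sum-meas}, Theorem \ref{thm:rel-lk}, Theorem \ref{thm:bigoplus} --- and your appeal to relativized versions of these is at the same level of rigor as the paper's own use of them in Theorem \ref{thm-ba}. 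Two small points in your favor: your realization step makes the ``every $X$ yields some element of $L$'' direction considerably more transparent than the paper's case analysis, and your prefix trick ensuring the $B_i^*$ are pairwise distinct addresses a detail the paper leaves implicit (it tacitly assumes the sequences $A_i^*$, resp.\ $B_i^*$, are distinct so that $\MLR_\mu^X$ determines the set of derandomized generators). What the paper's approach buys in exchange is an explicit, self-contained combinatorial construction of the lattice of sets, which it reuses to discuss concrete examples such as Figure \ref{fig:fdl2}.
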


The following terminology will be useful in the proof of Theorem \ref{thm-fdl}.  Let $L$ be a finite distributive lattice of $n$ elements.  We will consider $L$ in terms of levels, where Level 1 consists of the top element $1_{L}$, Level 2 consists of the immediate predecessors of $1_{L}$, Level 3 consists of the immediate predecessors of elements of Level 2, and so on.  Since $L$ has size $n$, there are only finitely many levels (in fact, at most $n$ levels), and since it is a lattice, the lowest level consists solely of the bottom element $0_{L}$. 

\begin{remark}\label{rmk}
That every element of $L$ lies on a unique level follows from the fact that every finite distributive lattice has a unique rank function that assigns to each $a\in L$ its height in $L$; see, for instance, \cite[pp. 103-104]{Sta12}.
\end{remark}

Recall that for $a,b\in L$, the \emph{meet} of $a$ and $b$, denoted $a\wedge b$, is the greatest element in $L$ such that $a\geq a\wedge b$ and $b\geq a\wedge b$.  The element $c\in L$ is \emph{meet-reducible} if there are $a,b>c$ such that $a\wedge b=c$, and it is \emph{meet-irreducible} if it is not meet-reducible.  

To prove Theorem \ref{thm-fdl}, the idea is (i) construct a lattice of sets isomorphic to $L$, (ii) use these sets to define a collection of tally functionals, and (iii) define a measure in terms of these tally functionals, which will give rise to an $LR$-structure that is isomorphic to $(L,\leq)$.  Let us first consider an example.

Let $(L,\leq)$ be the finite distributive lattice given below in Figure \ref{fig:fdl}.

\begin{figure}[h!tb]
  \begin{center}
\centerline{\includegraphics[scale=.8]{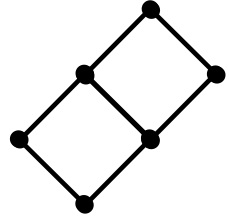}}
    \caption{The finite distributive lattice $(L,\leq)$}
    \label{fig:fdl}
  \end{center}
\end{figure}

\noindent
Now let $A\in\MLR\cap\Delta^0_2$, and let $\{A_i\}_{i\in\omega}$ be such that 
\[
A=\bigoplus_{i\in\omega} A_i,
\]
so that each $A_i\in\MLR\cap\Delta^0_2$.  Hereafter, the sequences $A_0, A_1,\dotsc$ will be referred to as \emph{basic sequences}.  An important feature of these basic sequences is that each $A_i$ is Martin-L\"of random relative to a finite join of any basic sequences that differ from $A_i$ (see Theorem \ref{thm:bigoplus}).

We proceed by associating to each element at each level of $L$ a set consisting of some of the $A_i$'s or joins of the $A_i$'s, yielding a finite distributive lattice of sets that is isomorphic to $L$, as in Figure \ref{fig:fdl2}.

\begin{figure}[h!tb]
  \begin{center}
       \centerline{ \includegraphics[scale=.12]{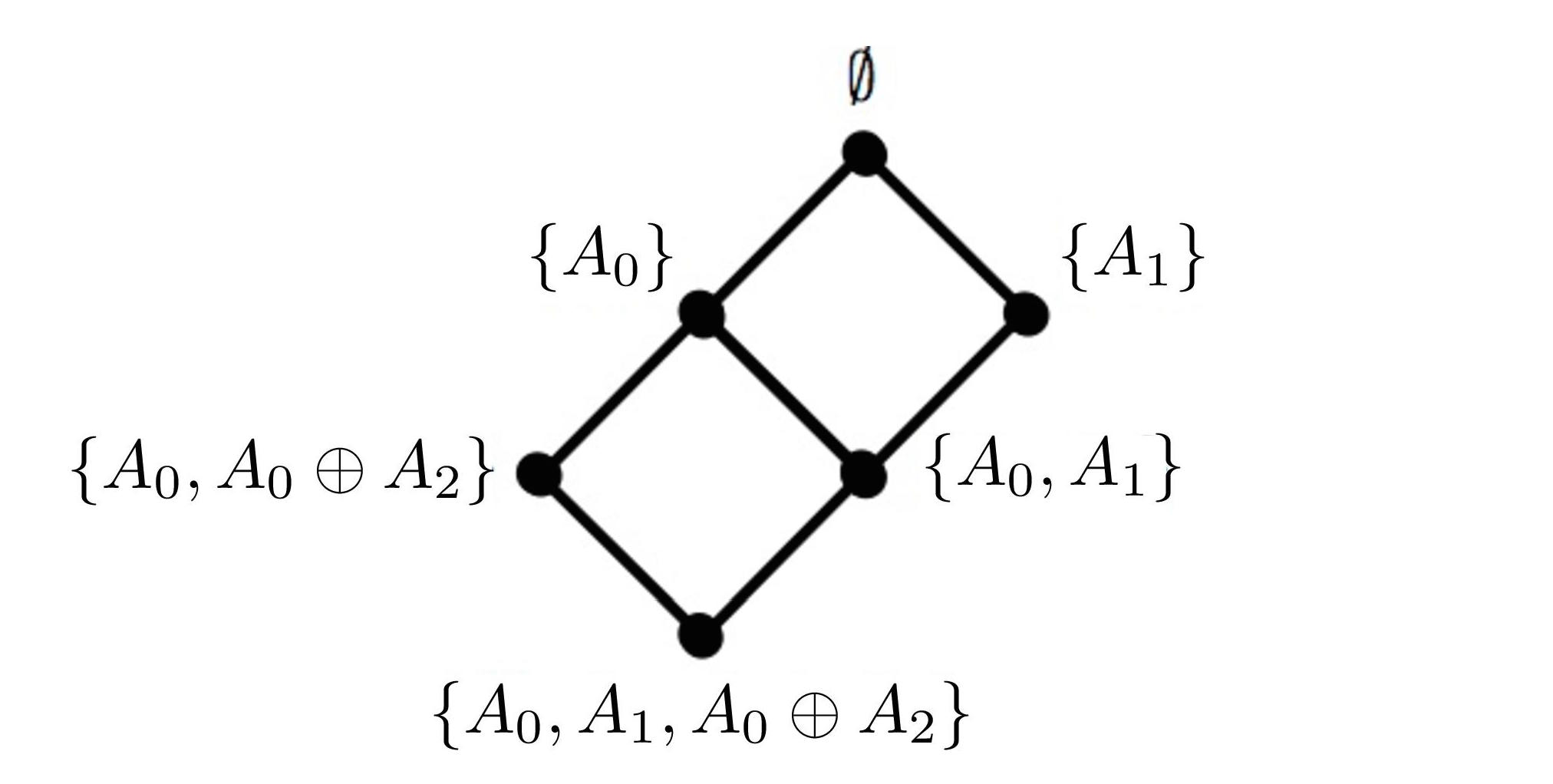}}
    \caption{A finite distributive lattice of sets isomorphic to $L$}
    \label{fig:fdl2}
  \end{center}
\end{figure}

\noindent \textbf{Level 1:}  We associate to the top element $1_{L}$ the empty set.\\

\noindent \textbf{Level 2:}  There are two elements in Level 2, and so we associate to one the set $\{A_0\}$ and to the other $\{A_1\}$.\\

\noindent \textbf{Level 3:}  There are two elements in Level 3, one of which is meet-reducible and the other meet-irreducible.  To the meet-reducible element, we associate the set $\{A_0,A_1\}$, and to the meet-irreducible element (which is below the element associated to the set $\{A_0\}$), we associate the set $\{A_0,A_0\oplus A_2\}$, where $A_2$ is the first basic sequence in $\{A_i\}_{i\in\omega}$ (in the order given by the indices) that has not appeared in the construction thus far.  Note that any sequence that derandomizes $A_0$ also derandomizes $A_0\oplus A_2$, but not every element that derandomizes $A_0\oplus A_2$ also derandomizes $A_0$ (such as $A_2$ itself).\\

\noindent \textbf{Level 4:}  The only element at Level 4 is $0_{L}$, the meet of the two Level 3 elements, and thus we associate to this element the set $\{A_0,A_1,A_0\oplus A_2\}$.\\

Next, for each element in the set associated with $0_{L}$, namely $A_0$, $A_1$, and $A_0\oplus A_2$, let $\mu_{A_0}$, $\mu_{A_1}$, and $\mu_{A_0\oplus A_2}$ be the measures induced by the tally functionals $\Phi_{A_0}$, $\Phi_{A_1}$, and $\Phi_{A_0\oplus A_2}$ defined in terms of fixed $\Delta^0_2$ approximations of $A_0$, $A_1$, and $A_0\oplus A_2$.  We define
\[
\mu:=\frac{1}{3}(\mu_{A_0}+\mu_{A_1}+\mu_{A_0\oplus A_2}),
\]
Let
\[
\Phi_{A_0}(A_0)=A_0^*,\Phi_{A_1}(A_1)=A_1^*,\text{ and }\Phi_{A_0\oplus A_2}(A_0\oplus A_2)=(A_0\oplus A_2)^*.
\]
Then for any $X\in\cs$, 
\[
\MLR_\mu^X=\atom_\mu\cup\S,
\] where $\S$ is equal to one of the following:
\[
\emptyset, \{A_0^*\}, \{A_1^*\}, \{A_0^*,A_1^*\}, \{A_0^*,(A_0\oplus A_2)^*\},\text{ or }\{A_0^*,A_1^*,(A_0\oplus A_2)^*\}.
\]
Thus we have a one-to-one correspondence (that preserves $\subseteq$) between the above sets and those sets associated to the elements of $L$, and thus 
\[
(\LRDmu, \LRrmu)\cong (L,\leq).
\]
Now we proceed in full generality.\\

\begin{proof}[Proof of Theorem \ref{thm-fdl}]
Let $L$ be a finite distributive lattice.  We proceed as in the example.  We first associate basic sequences and joins of basic sequences to elements of the various levels of $L$.\\

\noindent \textbf{Level 1:}  We associate to the top element $1_{L}$ the empty set.\\

\noindent\textbf{Level 2:}  To each of the $j\leq k$ elements in Level 2, we associate a singleton consisting of a basic sequence $A_1,A_2,\dotsc,A_k$.\\

\noindent \textbf{Level $\mathbf{n+1}$:}  The set we associate to a Level $n+1$ element depends on whether it is meet-reducible or meet-irreducible.\\

\begin{itemize}
\item[$\circ$]  The meet-reducible case:  Let $a=b\wedge c$, where $b$ and $c$ are Level $n$ elements. If $\S_b$ is the set of sequences associated to $b$ and $\S_c$ is the set of sequences associated to $c$, then we associate the set $\S_b\cup\S_c$ to $a$.  (Note:  We will have to verify that this is well-defined, for it may be the case that there are Level $n$ elements $b'$ and $c'$ that differ from $b$ and $c$ but also satisfy $b'\wedge c'=a$.)\\

\item[$\circ$]  The meet-irreducible case:  If $a$ is meet-irreducible, then there is only one Level $n$ element such that $a\leq b$.  If $\S_b$ is the set associated to $b$, then we proceed as follows.  First, let $\{A_{i_1},A_{i_2},\dotsc,A_{i_\ell}\}$ be the collection of basic sequences appearing in $S_b$.  That is, these are either elements of $\S_b$ or are contained in joins in $\S_b$ (so that, for instance, the basic sequences appearing in $\{A_0,A_1\oplus A_2\}$ are $A_0,A_1,$ and $A_2$).  Let $N\in\omega$ be the least such that the basic sequence $A_N$ has not appeared in any set associated to an element of $L$.  Then to $a$ we associate the set
\[
\biggl\{\bigoplus_{j=1}^\ell A_{i_j}\oplus A_N\biggr\}\cup \S_b.
\]
\end{itemize}

To verify that $\S_{L}=(\{\S_a:a\in L\},\leq)$ is a finite distributive lattice isomorphic to $(L,\leq)$ (where $\S_a\leq \S_b$ if and only if $\S_a\supseteq\S_b$), we first show that meets in $\S_{L}$ are well-defined.  First, suppose that $a,b,c,d\in L$ are distinct elements such $a=b\wedge c=c\wedge d$, as in Figure \ref{fig:m3-1}.

\begin{figure}[h!tb]
  \begin{center}
       \centerline{ \includegraphics[scale=.8]{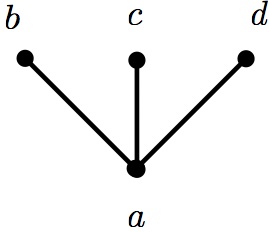}}
    \caption{The case in which $a=b\wedge c=c\wedge d$}
    \label{fig:m3-1}
  \end{center}
\end{figure}

\noindent
We claim that $b\vee c\neq c\vee d$.  For otherwise, the lattice $M_3$ (pictured in Figure \ref{fig:m3-2} below) would be embeddable into $L$, contradicting the fact that $L$ is distributive.

\begin{figure}[h!tb]
  \begin{center}
       \centerline{ \includegraphics[scale=.8]{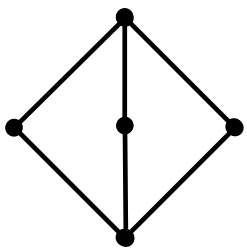}}
    \caption{The lattice $M_3$}
    \label{fig:m3-2}
  \end{center}
\end{figure}

Thus, we are in the situation as depicted by Figure \ref{fig:m3-3}.\\

\begin{figure}[h!tb]
  \begin{center}
       \centerline{ \includegraphics[scale=.8]{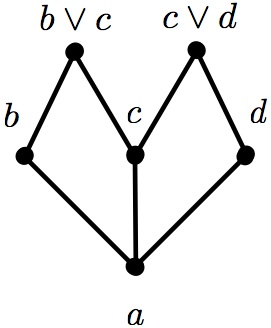}}
    \caption{$b\vee c\neq c\vee d$}
    \label{fig:m3-3}
  \end{center}
\end{figure}

\noindent
We must also have $b\vee d\neq b\vee c$ and $b\vee d\neq c\vee d$, for otherwise $M_3$ would be embeddable into $L$ (for instance, Figure \ref{fig:m3-4} shows the case that $b\vee d=c\vee d$).

\begin{figure}[h!tb]
  \begin{center}
       \centerline{ \includegraphics[scale=.8]{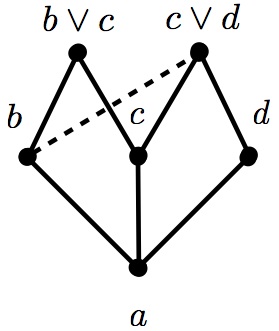}}
    \caption{The case that $b\vee d=c\vee d$}
    \label{fig:m3-4}
  \end{center}
\end{figure}

If $e=b\vee c, f=c\vee d$, and $g=b\vee d$, then $b=e\wedge g, c=e\wedge f$, and $d=f\wedge g$, and thus none of $b,c,$ or $d$ is meet-irreducible.

Now, suppose that 
\begin{equation}\label{eq:bcd}
\S_b\cup \S_c\neq\S_c\cup \S_d
\end{equation}
Since $b=e\wedge g, c=e\wedge f$, and $d=f\wedge g$, the following hold:
\begin{center}
$\S_b=\S_e\cup\S_g$\\
$\S_c=\S_e\cup\S_f$\\
$\S_d=\S_f\cup\S_g.$\\
\end{center}
By (\ref{eq:bcd}), we have
\[
\S_e\cup\S_f\cup\S_g=\S_b\cup \S_c\neq\S_c\cup \S_d=\S_e\cup\S_f\cup\S_g,
\]
which is impossible.  

In the case in which $a$ is a Level $n+1$ element (so that there is maximal chain $a<b_1<\dotsc<b_n=1_L$ of exactly $n$ elements above $a$) and there are distinct Level $n$ elements $b,c,b',c'$ strictly above $a$ such that $a=b\wedge c=b'\wedge c'$, we claim that $a=b\wedge c'$.  Suppose not.  Then $a<(b\wedge c')<b$ so $a$ also has a maximal chain $a<c_1<\dotsc<c_j=1_L$ of $j\geq n+1$ elements above it (since $b$ is a Level $n$ element).  Hence $a$ is also a Level $j+1$ element.  Since $j\neq n$, this contradicts the fact stated in Remark \ref{rmk} that every member of $L$ lies in a unique level.  Thus we have $a=b\wedge c'$.  

Now, since $a=b\wedge c=b\wedge c'=b'\wedge c'$, we are in the previous case.  Applying the argument given above to $b,c,c'$ and then to $b,b',c'$, we conclude that 
\[
\S_b\cup \S_c=\S_{b}\cup \S_{c'}=\S_{b'}\cup\S_{c'}.
\]
Thus, meets are well-defined. 

Next we show that the isomorphism between $\S_{L}=(\{\S_a:a\in L\},\leq)$ and $(L,\leq)$ holds level by level.  In particular, we show that meets and joins are preserved level by level.  First, it is clear that the top two levels of $\S_{L}$ and $L$ are isomorphic.  Now suppose that $\S_{L}$ and $L$ are isomorphic from Level 1 to Level $n$.  Having associated to Level $n$ elements $a$ and $b$ the sets $\S_a$ and $\S_b$, we associate the set $\S_a\cup\S_b$ to $a\wedge b$.

Suppose Level $n+1$ elements $a$ and $b$ are associated with $\S_a$ and $\S_b$.  To show that $\S_{a\vee b}$, the set associated to $a\vee b$, is $\S_a\cap\S_b$, we consider three cases.

\medskip
\noindent
{\bf Case 1:} First, if both $a$ and $b$ are meet-irreducible, then either (i) there is some Level $n$ element $c$ such that $a,b\leq c$, or (ii) there are distinct Level $n$ elements $c$ and $d$ such that $a\leq c$ and $b\leq d$.  

\medskip
\noindent
Subcase 1(i): By the procedure given above, 
\[
\S_a=\S_c\cup\{B\oplus A_\ell\}
\]
and
\[
\S_b=\S_c\cup\{B\oplus A_{\ell'}\},
\]
where $B$ is the join of the basic sequences appearing in $\S_c$ and $A_\ell,A_{\ell'}$ are distinct basic sequences not contained in any set associated to elements of Levels $k\leq n$.  Thus $\S_{a\vee b}=\S_c=\S_a\cap\S_b$.

\medskip
\noindent
Subcase 1(ii).
In this subcase,
\[
\S_a=\S_c\cup\{B_0\oplus A_\ell\},
\]
and
\[
\S_b=\S_d\cup\{B_1\oplus A_{\ell'}\},
\]
where $B_0$ and $B_1$ are the joins of the basic sequences appearing in $\S_c$ and $\S_d$, respectively, and $A_\ell,A_{\ell'}$ are distinct basic sequences not contained in any set associated to elements of Level $k$ for any $k\leq n$.  By induction, there is some $e\in L$ such that $e=c\vee d$ and $\S_e=\S_c\cap\S_d$.  Then we have $e=a\vee b$ and
\[
\S_a\cap\S_b=(\S_c\cup\{B_0\oplus A_\ell\})\cap(\S_d\cup\{B_1\oplus A_{\ell'}\})=\S_c\cap\S_d=\S_e=S_{a\vee b}.
\]

\medskip
\noindent
{\bf Case 2:} If $a$ is meet-irreducible but $b$ is meet-reducible, then again there are two subcases to consider:  Either (i) there is some Level $n$ element $c$ such that $a,b\leq c$, or (ii) there are distinct Level $n$ elements $c,d,e$ such that $a\leq c$ and $b=d\wedge e$.  

\medskip
\noindent
Subcase 2(i):  We have
\[
\S_a=\S_c\cup\{B\oplus A_\ell\},
\]
where $B$ is the join of the basic sequences appearing in $\S_c$ and $A_\ell$ is a basic sequence not contained in any set associated to any element of Level $k$ for any $k\leq n$, and
\[
\S_b=\S_c\cup\S_d
\]
for some Level $n$ element $d\neq c$.  Again it follows that
\[
\S_a\cap\S_b=(\S_c\cup\{B\oplus A_\ell\})\cap(\S_c\cup\S_d)=\S_c=\S_{a\vee b}.
\] 

\medskip
\noindent
Subcase 2(ii):  In this subcase, $c\vee(d\wedge e)=a\vee b$.  As above, 
\[
\S_a=\S_c\cup\{B\oplus A_\ell\}
\]
and
\[
\S_b=\S_d\cup\S_e.
\]
By the inductive hypothesis, we have $\S_c\cap(\S_d\cup\S_e)=\S_{c\vee(d\wedge e)}$, and thus
\[
\S_a\cap\S_b=(\S_c\cup\{B\oplus A_\ell\})\cap(\S_d\cup\S_e)=\S_c\cap(\S_d\cup\S_e)=\S_{c\vee(d\wedge e)}=\S_{a\vee b}.
\]

\medskip
\noindent
{\bf Case 3:}  Lastly, in the case that $a$ and $b$ are both meet-reducible, either (i) there are distinct Level $n$ elements $c,d,e$ such $a=c\wedge d$, and $b=d\wedge e$ or (ii) there are distinct Level $n$ elements $c,d,e,f$ such that $a=c\wedge d$ and $b=e\wedge f$.  

\medskip
\noindent
Subcase 3(i):  Since $a,b\leq d$, it follows that $\S_a=\S_c\cup\S_d$, $\S_b=\S_d\cup\S_e$, $\S_c\cap\S_e=\emptyset$, and thus
\[
\S_a\cap\S_b=(\S_c\cup\S_d)\cap(\S_d\cup\S_e)=\S_d=\S_{a\vee b}.
\]

\medskip
\noindent
Subcase 3(ii):  Note that $a\vee b=(c\wedge d)\vee(e\wedge f)$.  Since  $\S_a=\S_c\cup\S_d$ and $\S_b=\S_e\cup\S_f$, by the inductive hypothesis, it follows that
\[
\S_a\cap\S_b=(\S_c\cup\S_d)\cap(\S_e\cup\S_f)=\S_{(c\vee d)\wedge(e\vee f)}=\S_{a\vee b}.
\]

Having verified that $\S_{L}$ is a finite distributive lattice, we now turn to defining the trivial measure $\mu$.  Let
\[
\{B_1,\dotsc, B_k\}
\]
be the set in $\S_{L}$ associated to $0_{L}$.  By our construction, each $B_i$ is either a basic sequence or the join of some basic sequences.  Furthermore, since the basic sequences are all in $\MLR\cap\Delta^0_2$ and each is Martin-L\"of random relative to the finite join of any number of basic sequences that differ from it, it follows from van Lambalgen's Theorem  that each $B_i\in\MLR\cap\Delta^0_2$.

Let $\Phi_i$ be the tally functional defined in terms of the $\Delta^0_2$ approximation of $B_i$, and let $B^*_i=\Phi_i(B_i)$, so that $\MLR_{\mu_i}=\{B^*_i\}\cup\atom_{\mu_i}$.  Given $\S$, one of the sets of sequences that is associated to some element of $L$, we define $\S^*$ such that
\[
B\in\S\;\text{if and only if}\;B^*\in\S^*.
\]
Setting 
\[
\mu:=\frac{1}{k}\sum_{i=1}^k\mu_i,
\]
we claim that $(\LRDmu,\LRrmu)\cong(\S_{L},\leq)$.  First we show that for each $\S^*_a$, there is some $X\in\cs$ such that
\[
\S^*_a\cup\atom_\mu=\MLR^X_\mu.
\]
If we let $\RS(B)=\{X\in\cs:B\in\MLR^X\}$ be the \emph{randomness scope} of $B$, note that by Theorem \ref{thm:rel-lk},
\[
B_i\in\MLR^X\Leftrightarrow B^*_i\in\MLR_\mu^X,
\]
and hence $X\in\RS(B_i)$ if and only if $B_i^*\in\MLR^X_\mu$.
Observe that $\lambda(\RS(B))=1$ for every $B\in\MLR$, since by van Lambalgen's Theorem, $\MLR^B\subseteq\RS(B)$ and $\lambda(\MLR^B)=1$.

If
\[
\S^*_a=\{B^*_{i_1},\dotsc, B^*_{i_j}\},
\]
then
\[
\X=\bigcap_{j=1}^k\RS(B_{i_j})\neq\emptyset,
\]
since the finite intersection of measure one sets has measure one.  Thus for any $X\in\X$, we have $\MLR^X_\mu=\S^*_a\cup\atom_\mu$.

We claim that for each $X\in\cs$, there is some $a\in L$ such that $\MLR^X_\mu=\S^*_a\cup\atom_\mu$.  Let $\{A_1,\dotsc, A_k\}$ be the collection of basic sequences appearing in the elements of $\S_{L}$.  For each $i\leq k$, let $a_i\in L$ be the element such that the basic sequence $A_i$ first appears in $\S_{a_i}$.  Furthermore, for $j\leq k$, let $\{A_1,\dotsc,A_j\}$ be the basic sequences that make up the singletons assigned to Level 2 elements of $L$ (which we'll call the \emph{Level Two basic sequences}), and let $\{A_{j+1},\dotsc,A_k\}$ be the basic sequences that added when we assign sets to meet-irreducible elements of $L$ (which we'll call the \emph{meet-irreducible basic sequences}).  For each $X\in\cs$, there is some $J\subseteq\{1,\dotsc,j\}$ such that
\[
X\in\bigcap_{i\in J}\RS(A_i)\aand X\notin\bigcap_{i\in\{1,\dotsc,j\}\setminus J}\RS(A_i).
\]
That is, $J$ picks out the indices of the Level Two basic sequences that $X$ fails to derandomize.  Now if $J=\emptyset$, then as every $B_i\in\S_L$ is either a Level Two basic sequence or is the join of a Level Two basic sequence with some other sequence, it follows that $\MLR^X_\mu=\atom_\mu$.  If $J\neq\emptyset$, then it follows from the construction that
\[
\{A^*_i:i\in J\}\cup\atom_\mu=(\S_{\bigwedge_{i\in J}a_i})^*\cup\atom_\mu\subseteq\MLR_\mu^X.
\]
Turning to the meet-irreducible basic sequences, there is some $K\subseteq\{j+1,\dotsc,k\}$ such that
\[
X\in\bigcap_{i\in K}\RS(A_i)\aand X\notin\bigcap_{i\in\{j+1,\dotsc,k\}\setminus K}\RS(A_i).
\]
Now it may be that in the course of the construction, some $A_i$ with $i\in K$ is joined to some $A_\ell$ with $\ell\notin J$ (or joined to some sequence with $A_\ell$ as a subsequence).  This occurs when we associate a collection of sequences to a meet-irreducible element of $L$ that is below the element of $L$ to which we associated the Level Two basic sequence $A_\ell$. Let
\[
\widehat K=K\setminus\{i\in K: \S_{a_i}\supseteq \S_{a_\ell}\;\text{for some}\;\ell\notin J \}.
\]
Then if $\widehat{K}=\emptyset$, then $\MLR^X_\mu=(\S_{\bigwedge_{i\in J}a_i})^*\cup\atom_\mu$.  Otherwise, setting
\[
a=\bigwedge_{i\in J}a_i\wedge\bigwedge_{i\in\widehat{K}}a_i,
\]
it follows from the construction that $\MLR^X_\mu=\S^*_a\cup\atom_\mu$.

Thus, every $\S^*_a$ is the collection of non-atoms in $\MLR^X_\mu$ for some $X\in\cs$, and for every $X\in\cs$, there is some $\S^*_a$ such that $\MLR^X_\mu=\S^*_a\cup\atom_\mu$.  Since $X\LRrmu Y$ if and only if $\MLR_\mu^Y\subseteq\MLR_\mu^X$, it follows that 
\[
(\LRDmu,\LRrmu)\cong(\S_{L},\leq).
\]
\end{proof}

We conclude with two questions.

\begin{question}
If $(L,\leq)$ is an infinite, computable, distributive lattice, is there a trivial measure $\mu\in\M_c$ such that  
\[
(\mathscr{D}_{LR(\mu)},\leq_{LR(\mu)}) \cong (L,\leq)?
\]
\end{question}

\begin{question}
Is there an example of a finite non-distributive lattice $(L,\leq)$ and a trivial measure $\mu\in\M_c$ such that  
\[
(\mathscr{D}_{LR(\mu)},\leq_{LR(\mu)}) \cong (L,\leq)?
\]
\end{question}

%% For one-column wide figures use
%\begin{figure}
%% Use the relevant command to insert your figure file.
%% For example, with the graphicx package use
%  \includegraphics{example.eps}
%% figure caption is below the figure
%\caption{Please write your figure caption here}
%\label{fig:1}       % Give a unique label
%\end{figure}
%%
%% For two-column wide figures use
%\begin{figure*}
%% Use the relevant command to insert your figure file.
%% For example, with the graphicx package use
%  \includegraphics[width=0.75\textwidth]{example.eps}
%% figure caption is below the figure
%\caption{Please write your figure caption here}
%\label{fig:2}       % Give a unique label
%\end{figure*}
%%
%% For tables use
%\begin{table}
%% table caption is above the table
%\caption{Please write your table caption here}
%\label{tab:1}       % Give a unique label
%% For LaTeX tables use
%\begin{tabular}{lll}
%\hline\noalign{\smallskip}
%first & second & third  \\
%\noalign{\smallskip}\hline\noalign{\smallskip}
%number & number & number \\
%number & number & number \\
%\noalign{\smallskip}\hline
%\end{tabular}
%\end{table}

\section*{Acknowledgements}
The author would like to thank Laurent Bienvenu, Peter Cholak, and Damir Dzhafarov for helpful conversations on the material in this article (which appeared
in the author's dissertation).  The author would also like to thank Paul Shafer for useful feedback on a preliminary draft of this article. %If you'd like to thank anyone, place your comments here
%and remove the percent signs.

\bibliographystyle{alpha}
\bibliography{trivialmeasures.bib}

\begin{thebibliography}{DNWY06}

\bibitem[BP12]{BiePor12}
Laurent Bienvenu and Christopher Porter.
\newblock Strong reductions in effective randomness.
\newblock {\em Theoret. Comput. Sci.}, 459:55--68, 2012.

\bibitem[DH10]{DowHir10}
Rodney~G. Downey and Denis~R. Hirschfeldt.
\newblock {\em Algorithmic randomness and complexity}.
\newblock Theory and Applications of Computability. Springer, New York, 2010.

\bibitem[DNWY06]{DowNieWeb06}
Rod Downey, Andr\'e Nies, Rebecca Weber, and Liang Yu.
\newblock Lowness and {$\Pi^0_2$} nullsets.
\newblock {\em J. Symbolic Logic}, 71(3):1044--1052, 2006.

\bibitem[Kau91]{Kau91}
Steven Kautz.
\newblock {\em Degrees of Random Sets}.
\newblock PhD thesis, Cornell University, 1991.

\bibitem[Kur81]{Kur81}
Stuart Kurtz.
\newblock Randomness and genericity in the degrees of unsolvability.
\newblock Ph.D. Thesis, University of Illinois at Urbana, 1981.

\bibitem[ML66]{Mar66}
Per Martin-L{\"o}f.
\newblock The definition of random sequences.
\newblock {\em Information and Control}, 9:602--619, 1966.

\bibitem[Nie05]{Nie05}
Andr{\'e} Nies.
\newblock Lowness properties and randomness.
\newblock {\em Adv. Math.}, 197(1):274--305, 2005.

\bibitem[Nie09]{Nie09}
Andr{\'e} Nies.
\newblock {\em Computability and randomness}.
\newblock Oxford Logic Guides. Oxford University Press, 2009.

\bibitem[NST05]{NieSteTer05}
Andr{\'e} Nies, Frank Stephan, and Sebastiaan~A. Terwijn.
\newblock Randomness, relativization and {T}uring degrees.
\newblock {\em J. Symbolic Logic}, 70(2):515--535, 2005.

\bibitem[Sch71]{Sch71}
Claus-Peter Schnorr.
\newblock {\em Zuf\"alligkeit und {W}ahrscheinlichkeit. {E}ine algorithmische
  {B}egr\"undung der {W}ahrscheinlichkeitstheorie}.
\newblock Lecture Notes in Mathematics, Vol. 218. Springer-Verlag, Berlin,
  1971.

\bibitem[Sch77]{Sch77}
Claus-Peter Schnorr.
\newblock A survey of the theory of random sequences.
\newblock In {\em Basic problems in methodology and linguistics ({P}roc.
  {F}ifth {I}nternat. {C}ongr. {L}ogic, {M}ethodology and {P}hilos. of {S}ci.,
  {P}art {III}, {U}niv. {W}estern {O}ntario, {L}ondon, {O}nt., 1975)}, pages
  193--211. Univ. Western Ontario Ser. Philos. Sci., Vol. 11. Reidel,
  Dordrecht, 1977.

\bibitem[Soa87]{Soa87}
Robert~I. Soare.
\newblock {\em Recursively enumerable sets and degrees}.
\newblock Perspectives in Mathematical Logic. Springer-Verlag, Berlin, 1987.
\newblock A study of computable functions and computably generated sets.

\bibitem[Sta12]{Sta12}
Richard~P. Stanley.
\newblock {\em Enumerative combinatorics. {V}olume 1}, volume~49 of {\em
  Cambridge Studies in Advanced Mathematics}.
\newblock Cambridge University Press, Cambridge, second edition, 2012.

\bibitem[vL90]{Lam90}
Michiel van Lambalgen.
\newblock The axiomatization of randomness.
\newblock {\em J. Symbolic Logic}, 55(3):1143--1167, 1990.

\bibitem[ZL70]{ZvoLev70}
A.~K. Zvonkin and L.~A. Levin.
\newblock The complexity of finite objects and the basing of the concepts of
  information and randomness on the theory of algorithms.
\newblock {\em Uspehi Mat. Nauk}, 25(6(156)):85--127, 1970.

\end{thebibliography}

\end{document}